\documentclass[10pt]{article}

\usepackage{amsmath,amssymb,amsthm}
\usepackage[usenames,dvipsnames]{xcolor} 	
\usepackage{enumerate} 
\usepackage{fullpage}


\numberwithin{equation}{section}

\theoremstyle{plain}
\newtheorem{theorem}{Theorem}[section]
\newtheorem{proposition}[theorem]{Proposition}

\newtheorem{lemma}[theorem]{Lemma}

\theoremstyle{definition}
\newtheorem{definition}[theorem]{Definition}
\newtheorem{convention}[theorem]{Convention}

\theoremstyle{remark}
\newtheorem{remark}[theorem]{Remark}


\DeclareMathOperator{\adj}{adj} 
\newcommand{\aplim}{\operatorname*{ap\:lim}}

\DeclareMathOperator{\cof}{cof}
\DeclareMathOperator{\Det}{Det}
\DeclareMathOperator{\dist}{dist}       
\renewcommand{\div}{\operatorname{div}}
\DeclareMathOperator{\Div}{Div}     
\DeclareMathOperator{\imG}{im_G}
\DeclareMathOperator{\imT}{im_T}
\DeclareMathOperator{\loc}{loc}

\DeclareMathOperator{\spt}{spt}
\DeclareMathOperator{\mec}{mec}

\DeclareMathOperator*{\essosc}{ess \: osc}
\DeclareMathOperator{\diam}{diam}
\DeclareMathOperator{\inv}{in}

\renewcommand{\O}{\Omega}
\newcommand{\id}{\mathbf{id}}         
\newcommand{\dd}{\mathrm{d}}
\newcommand{\N}{\mathbb{N}}
\newcommand{\Z}{\mathbb{Z}}

\newcommand{\R}{\mathbb{R}} 
\newcommand{\Rn}{\mathbb{R}^n} 
\newcommand{\Rnn}{\mathbb{R}^{n \times n}}            
\newcommand{\res}{\mathop{\hbox{\vrule height 7pt width .5pt depth 0pt \vrule height .5pt width 6pt depth 0pt}}\nolimits\,}
\newcommand{\ssubset}{\subset \! \subset}
\newcommand{\Sn}{\mathbb{S}^{n-1}}
\renewcommand{\vec}{\mathbf}    
\newcommand{\vecg}{\boldsymbol}
\newcommand{\weakc}{\rightharpoonup}
\newcommand{\weakcs}{\overset{*}{\rightharpoonup}}

\newcommand{\mc}{\mathcal}
\newcommand{\p}{\partial}

\renewcommand{\d}{\delta}

\newcommand{\e}{\varepsilon}

\newcommand{\f}{\varphi}

\newcommand{\Ln}{\mathcal{L}^n}
\newcommand{\Hn}{\mathcal{H}^{n-1}}
\newcommand{\vacio}{\varnothing}

\def\Xint#1{\mathchoice
{\XXint\displaystyle\textstyle{#1}}%
{\XXint\textstyle\scriptstyle{#1}}%
{\XXint\scriptstyle\scriptscriptstyle{#1}}%
{\XXint\scriptscriptstyle\scriptscriptstyle{#1}}%
\!\int}
\def\XXint#1#2#3{{\setbox0=\hbox{$#1{#2#3}{\int}$}
\vcenter{\hbox{$#2#3$}}\kern-.5\wd0}}

\def\dashint{\Xint-}


\title{Orlicz-Sobolev nematic elastomers}\date{}

\author{Duvan Henao and Bianca Stroffolini }
\newcommand{\Addresses}{{
  \bigskip
  \footnotesize

 B.~Stroffolini, \textsc{Dipartimento di Ingegneria Elettrica e delle Tecnologie dell'Informazione , Universit\`{a} di Napoli Federico II, Via Claudio, 80126 Napoli, Italy}\par\nopagebreak
  \textit{E-mail address}, B.~Stroffolini: \texttt{bstroffo@unina.it}

  \medskip

D..~Henao, \textsc{Faculty of Mathematics , Pontificia Universidad Cat\'olica de Chile, Vicu\~na Mackenna 4860, Macul, Santiago, Chile}\par\nopagebreak
  \textit{E-mail address}, D.~Henao: \texttt{dhenao@mat.puc.cl}

}}

\begin{document}

\maketitle

\begin{abstract} 
  We extend  the existence theorems
  in [Barchiesi, Henao \& Mora-Corral; ARMA 224],
  for models of nematic elastomers and magnetoelasticity,
  to a larger class in the scale of Orlicz spaces.
  These models consider both an elastic term where 
  a polyconvex energy density is composed with
  an unknown state variable defined in the deformed configuration,
  and a functional corresponding to the nematic energy (or the exchange and magnetostatic energies
  in magnetoelasticity)
  where the energy density is integrated over the deformed configuration.
  In order to obtain the desired compactness and lower semicontinuity,
  we show that the regularity requirement that maps create no new surface 
  can still be imposed when the gradients are in an Orlicz class with an integrability
  just above the space dimension minus one.
  We prove that the fine properties of orientation-preserving maps satisfying that regularity requirement
  (namely, being weakly 1-pseudomonotone, $\mathcal H^1$-continuous, a.e.\ differentiable, and a.e.\ locally invertible)
  are still valid in the Orlicz-Sobolev setting. 
\end{abstract}


\section{Introduction}

Motivated by the modelling of nematic elastomers, Barchiesi \& DeSimone \cite{Barchiesi15}
analyzed the minimization of functionals of the form 
\begin{align}
 \label{eq:model1}
 I(\vec u, \vec n)= \int_\Omega W_{\mec}(D\vec u(\vec x), \vec n(\vec u(\vec x)))\dd\vec x 
  + \int_{\vec u(\Omega)} |D\vec n(\vec y)|^2 \dd\vec y
\end{align}
where $\Omega\subset \R^3$, $\vec u\in W^{1,p}(\Omega, \R^3)$ for some $p>3$,
$\vec n \in H^1(\vec u(\Omega), \R^2)$, and 
\begin{align}
 \label{eq:model2}
  W_{\mec}(\vec F, \vec n)=  W \Big ( (\alpha^{-1}\vec n \otimes \vec n + \sqrt{\alpha} (I-\vec n
\otimes \vec n)) \vec F\Big )
\end{align}
for a certain $\alpha>0$ and some polyconvex energy function $W$. 
Functionals with a similar structure appear also 
in models describing the nematic mesogens with the Landau-de Gennes theory, 
and 
in magnetoelasticity and plasticity, see, e.g., 
\cite{CaGaL14,DaFo92,FoGa95paper,Kruzik15,BHM17}.
The major difficulties are that $I$ depends on the composition of the two unknowns and
that the nematic director $\vec n$ is defined in the domain $\vec u(\Omega)$ 
which is also determined only as a part of the solution of the variational problem. 
The analysis is based on the inverse function theorem for Sobolev maps due to 
Fonseca \& Gangbo \cite{FoGa95paper}, which is valid for $W^{1,p}$ maps from a domain in $\R^n$
to $\R^n$ when $p>n$. Using the results for the Sobolev regularity of the inverse obtained in
\cite{HeMo10,HeMo11,HeMo12,HeMo15}, both the local invertibility theorem of Fonseca \& Gangbo
and the analysis of Barchiesi \& DeSimone were generalized by Barchiesi, Henao \& Mora-Corral 
\cite{BHM17} to a suitable class of maps in $W^{1,p}(\Omega, \R^3)$ 
for all $p>2$.
The importance of relaxing the hypothesis on the integrability exponent $p$
is that, on the one hand, they are related to the coercivity that the stored energy function $W$ 
is assumed to possess and, on the other hand, 
the analysis should ideally depend as little as possible 
on the behaviour of $W$ at infinity (for physical reasons).
Here the less restrictive condition that 
\begin{equation} \label{eq:young-1}
\int_\Omega A(|D\vec u(\vec x)|)\dd\vec x <\infty 
\end{equation}
for some Young function $A:[0,\infty)\to [0,\infty]$ 
satisfying 
\begin{equation}\label{young}
\int^\infty \frac{t}{A(t)} \dd t <\infty
\end{equation}
(e.g.\ $A(t):=t^2\log^\alpha t$ for any $\alpha>1$)
is shown to be also sufficient to establish the existence of minimizers
for functionals like $I(\vec u, \vec n)$ in \eqref{eq:model1}.

In the paper \cite{Kauhanen99}, the authors investigated the minimal analytic assumptions 
on a map $u\colon\Omega \to \mathbb{R}^n$ to guarantee continuity, 
differentiability a.e. and the Lusin (N) condition. As far as the condition (N) is concerned,
the  $n$-absolute continuity introduced by Mal\'y in \cite{Maly99} plays an important role. 
It turned out that this condition is satisfied by a function $u\in W^{1,1}(\Omega)$ whenever 
their weak partial derivatives are in the Lorentz space $L^{n,1}(\Omega)$. 
In particular, they characterize the space $L^{n,1}$ in terms of an Orlicz integrability condition. 
This condition is exactly the one stated in \cite {Cianchi96}, see Theorem \ref{pr:embedding}.
We will prove this condition on manifolds of dimension $n-1$.

Our result, on the one hand, enlarges the class of maps in which the minimization problem can be set.
On the other hand, it sheds new light on results on invertibility of maps and interpenetration of matter.
In fact, we can consider the class of Sobolev-Orlicz maps
and define accordingly the notion of zero surface energy ($\mathcal{E}(\vec u)=0$, see Definition \ref{def:SE}). 
This, in turn, when imposed together with the positivity of the Jacobian determinant,
is equivalent to the requirement that 
$\Det D\vec u = \det D\vec u$ 
(where $\Det D\vec u$ denotes the distributional determinant, see Definition \ref{de:Det})
and that $\vec u$ preserves orientation in the topological sense.
\begin{theorem}Let $A$ be a Young function satisfying \eqref{young} and 
suppose that $\vec u\in W^{1,A}(\Omega. \R^n)$ satisfies $det D\vec u\in L^1_{loc}(\Omega)$, Then we have the equivalence:
\begin{itemize}
\item{}$\mathcal{E}(\vec u)=0$ and $detD\vec u>0$ a.e.; 
\item{}$(\adj D\vec u)\vec u\in L^1_{loc}(\Omega. \R^n)$, $\det D\vec u(x)\not=0$ for a.e.\ $x\in \Omega$, 
  $\det D\vec u=\Det D\vec u$ and $\deg (\vec u, B(\vec x, r))\ge 0$ for every $\vec x\in \Omega$ and a.e.\ $r\in (0, \dist(\vec x, 
  \partial \Omega))$.
\end{itemize}
\end{theorem}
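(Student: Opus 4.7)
The plan is to transplant to the Orlicz--Sobolev setting the chain of equivalences established for $W^{1,p}(\O,\R^n)$, $p>n-1$, in \cite{HeMo10,HeMo12}, using hypothesis \eqref{young} and the embedding of Theorem~\ref{pr:embedding} in place of ordinary Sobolev embedding above the dimension. The backbone is the distribution $T_{\vec u}$ on $\O\times\R^n$ defined by
\begin{equation*}
  \langle T_{\vec u},\,\vecg g\rangle
   = -\int_\O \Bigl[(\cof D\vec u(\vec x))\vec u(\vec x)\cdot D_{\vec x}\vecg g(\vec x,\vec u(\vec x))
     + \det D\vec u(\vec x)\,\div_{\vec y}\vecg g(\vec x,\vec u(\vec x))\Bigr]\,\dd\vec x,
\end{equation*}
whose total variation is $\mc E(\vec u)$. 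Both directions of the theorem are reduced to an integration-by-parts identity for $T_{\vec u}$ on almost every sphere, which in turn is reduced to verifying that a.e.\ sphere is a \emph{good sphere}, as described next.

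\emph{Step 1 (good spheres).} Using Fubini in spherical coordinates around $\vec x\in\O$, the integral $\int_\O A(|D\vec u|)\,\dd\vec x$ decomposes to give $\int_{\p B(\vec x,r)}A(|D_\tau\vec u|)\,\dd\Hn<\infty$ for a.e.\ $r\in(0,\dist(\vec x,\p\O))$. Applying Theorem~\ref{pr:embedding} on the $(n-1)$-manifold $\p B(\vec x,r)$, condition \eqref{young} delivers continuity of $\vec u|_{\p B(\vec x,r)}$ together with integrability of all tangential minors; in particular $(\adj D\vec u)\vec u$ admits an $L^1$ trace on such spheres and $\deg(\vec u,B(\vec x,r))$ is well defined.

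\emph{Step 2 (equivalence).} For (i)$\Rightarrow$(ii): $\mc E(\vec u)=0$ means $T_{\vec u}\equiv0$. Testing against product fields $\vecg g(\vec x,\vec y)=\f(\vec x)\vecg\psi(\vec y)$ yields $(\adj D\vec u)\vec u\in L^1_{\loc}$ and $\Det D\vec u=\det D\vec u$ in $\mc D'(\O)$. Localizing to a good sphere and letting $\vecg\psi$ approximate $\delta_{\vec y}$, the classical degree formula expresses $\deg(\vec u,B(\vec x,r),\vec y)$ as an integral against the positive measure $\det D\vec u\,\dd\vec z$, so $\deg\geq 0$. For (ii)$\Rightarrow$(i): integrability of $(\adj D\vec u)\vec u$ legitimates the divergence computation defining $T_{\vec u}$; combining $\Det D\vec u=\det D\vec u$ with $\deg\geq0$ on good spheres forces $T_{\vec u}=0$, i.e.\ $\mc E(\vec u)=0$, while $\det D\vec u\neq 0$ a.e.\ together with the non-negative degree promote this to $\det D\vec u>0$ a.e.

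\emph{Main obstacle.} The substance of the proof is Step~1. In the $L^p$, $p>n-1$ case, Fubini immediately gives $D\vec u|_{\p B}\in L^p$ on a.e.\ radius and Morrey's embedding on the sphere supplies continuity. In the Orlicz case one must show both that \eqref{young} survives spherical slicing on a set of radii of full measure, and that Theorem~\ref{pr:embedding} transfers from flat $(n-1)$-domains to curved spheres $\p B(\vec x,r)$ with bounds sufficiently uniform in $r$. Since \eqref{young} is the sharp borderline for continuity in dimension $n-1$, no logarithmic slack is available in either step; the genuine work beyond the $L^p$ theory is concentrated here.
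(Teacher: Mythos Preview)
Your approach is essentially the same as the paper's: slice to $(n-1)$-spheres, invoke the Orlicz--Sobolev embedding (Proposition~\ref{pr:embedding}) to get continuity and Lusin's condition on good spheres, apply the degree formula (Proposition~\ref{prop:degdiv}), and then run the equivalence arguments of \cite{HeMo12,BHM17} verbatim. The paper packages Step~1 into the class $\mc U_{\vec u}$ of good open sets (Definition~\ref{df:good_open_sets}) rather than restricting to balls, but this is cosmetic.

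Two points where you overstate the difficulty. First, the transfer of Proposition~\ref{pr:embedding} from flat domains to spheres is handled by local charts in one line (see the proof of Proposition~\ref{pr:embedding}); there is nothing borderline here because the spheres have no boundary and the chart maps are smooth. Second, the survival of the Orlicz condition under spherical slicing is routine \emph{once you assume the $\Delta_2$-condition at infinity}: under $\Delta_2$, membership in $L^A$ is equivalent to finiteness of the modular $\int A(|\cdot|)$ (equation~\eqref{eq:slicing1}), and the latter slices by ordinary Fubini. You do not mention $\Delta_2$, but without it the slicing argument you sketch does not work; the paper imposes it throughout Section~\ref{se:classA} precisely for this reason (the statement in the introduction is informal on this point).

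One minor gap in Step~2: you write that testing $T_{\vec u}=0$ against product fields ``yields $(\adj D\vec u)\vec u\in L^1_{\loc}$''. Testing alone does not give this; you need $\vec u\in L^\infty_{\loc}$ first, which the paper obtains (Proposition~\ref{th:char1}\emph{a)}) from the degree identity $\deg(\vec u,U,\cdot)=\mc N_U$ on good sets and the boundedness of $\imT(\vec u,U)$. Once $\vec u$ is locally bounded, $(\adj D\vec u)\vec u\in L^1_{\loc}$ follows because $\cof D\vec u\in L^1_{\loc}$ under \eqref{young}.
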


This article explains the new ideas and the results in the literature of Orlicz-Sobolev spaces
that are required to generalize the analysis of  \cite{BHM17} (full detail of the proofs is not given 
since that would render the article unnecessarily long, given the technical difficulties).
Section \ref{se:notation} is for notation and preliminaries. Section \ref{se:H1continuity}
proves that weakly monotone 
maps  having the integrability \eqref{eq:young-1}-\eqref{young} 
are continuous 
at every point outside an $\mathcal H^1$-null set 
(in the classical sense, not only in the sense of quasi-continuity). 
The functional class of orientation-preserving Orlicz-Sobolev maps creating no surface,
proposed for the modelling of nematic elastomers, is defined and studied in Section \ref{se:classA}.
Concretely, maps in this class are proved to be $1$-pseudomonotone, \cite{HaMa02};
 to have a precise representative that satisfies Lusin's contition and is $\mathcal H^1$-continuous
and a.e.\ differentiable;  to be, in a certain sense, open and proper; 
and to be locally invertible around almost every point, the local inverses
and their minors 
being Sobolev and sequentially weakly continuous.
The main existence theorem, for functionals, such as \eqref{eq:model1}, 
defined both in the reference and in the deformed configuration,
is stated finally in Section \ref{se:lower}.

\section{Notation and preliminaries}\label{se:notation}

\subsection{General notation}
We will work in dimension $n \geq 3$, and $\O$ is a bounded open set of $\Rn$.
Vector-valued and matrix-valued quantities will be written in boldface.
Coordinates in the reference configuration will be denoted by $\vec x$, and in the deformed configuration by $\vec y$.

The characteristic function of a set $A$ is denoted by $\chi_A$.
Given two sets $U,V$ of $\Rn$, we will write $U \ssubset V$ if $U$ is bounded and $\bar{U} \subset V$.
The open ball of radius $r>0$ centred at $\vec x \in \Rn$ is denoted by $B(\vec x, r)$; 
unless otherwise stated, a \emph{ball} is understood to be open.
The $(n-1)$-dimensional sphere in $\Rn$ centred at $\vec x_0$, with radius $r$, is denoted by 
$S(\vec x_0, r)$ or $S_r(\vec x_0)$. 

Given a square matrix $\vec A \in \Rnn$, 
the adjugate matrix $\adj \vec A$ satisfies $(\det \vec A) \vec{I} = \vec A \adj \vec A$, where $\vec{I}$ denotes the identity matrix.
The transpose of $\adj \vec A$ is the cofactor $\cof \vec A$.
If $\vec A$ is invertible, its inverse is denoted by $\vec A^{-1}$.
The inner (dot) product of vectors and of matrices will be denoted by $\cdot$.
The Euclidean norm of a vector $\vec x$ is denoted by $|\vec x|$, and the associated matrix norm is also denoted by $\left| \cdot \right|$.
Given $\vec a, \vec b \in \Rn$, the tensor product $\vec a \otimes \vec b$ is the $n \times n$ matrix whose component $(i,j)$ 
is $a_i \, b_j$.
The set $\Rnn_+$ denotes the subset of matrices in $\Rnn$ with positive determinant.

The Lebesgue measure in $\Rn$ is denoted by $\mc{L}^n$, and the 
$(n-1)$-dimensional Hausdorff measure by $\mc{H}^{n-1}$.
The abbreviation \emph{a.e.} stands for \emph{almost everywhere} or \emph{almost every}; 
unless otherwise stated, it refers to the Lebesgue $\Ln$ measure.
For $1 \leq p \leq \infty$, the Lebesgue $L^p$, Sobolev $W^{1,p}$ and 
bounded variation $BV$ spaces are defined in the usual way.
So are the functions of class $C^k$, 
for $k$ a positive integer of infinity, and their versions $C^k_c$ of compact support.
The set of (positive or vector-valued) Radon measures is denoted by $\mc{M}$.
The conjugate exponent of $p$ is written $p'$.
We do not identify functions that coincide a.e.;
moreover an $L^p$ or $W^{1,p}$ function may eventually be defined only at a.e.\ point of its domain.
We will indicate the domain and target space, as in, for example, $L^p (\O,\Rn)$,
except if the target space is $\R$, in which case we will simply write $L^p (\O)$.
Given $S \subset \Rn$, the space $L^p (\O,S)$ denotes the set of $\vec u \in L^p (\O,\Rn)$ 
such that $\vec u (\vec x) \in S$ for a.e.\ $\vec x \in \O$.
The space $W^{1,p}_{\loc} (\O)$ is the set of funcions $\vec u$ defined in $\O$ such that $\vec u |_A \in W^{1,p} (A)$ for any open $A \ssubset \O$; we will analogously use the subscript $\loc$ for other function spaces.
Weak convergence (typically, in $L^p$ or $W^{1,p}$) is indicated by $\weakc$,
while $\weakcs$ is the symbol for weak$^*$ convergence in $\mc{M}$ or in $BV$.
The supremum norm in a set $A$ (typically, a sphere) is indicated by $\left\| \cdot \right\|_{\infty,A}$, 
while $\dashint_A$ denotes the integral in $A$ divided by the measure of $A$.
The identity function in $\Rn$ is denoted by $\id$.
The support of a function is indicated by $\spt$.

The distributional derivative of a Sobolev function $\vec u$ is written $D \vec u$, which is defined a.e\@.
If $\vec u$ is differentiable at $\vec x$, its derivative is denoted by $D \vec u (\vec x)$, 
while if $\vec u$ is differentiable everywhere, the derivative function is also denoted by $D \vec u$.
Other notions of differentiability, 
which carry different notations, are explained in Section \ref{subse:approximate} below.

If $\mu$ is a measure on a set $U$, and $V$ is a $\mu$-measurable subset of $U$, then the restriction of $\mu$ to $V$ is  
denoted by $\mu \res V$. The measure $|\mu|$ denotes the total variation of $\mu$. 

Given two sets $A, B$ of $\Rn$, we write $A \subset B$ a.e.\ 
if $\mc{L}^n (A \setminus B) = 0$, while $A = B$ a.e.\ means 
$A \subset B$ a.e.\ and $B \subset A$ a.e\@.
An analogous meaning is given to the expression $\Hn$-a.e\@. 
With $\bigtriangleup$ we denote the symmetric difference of sets: 
$A \bigtriangleup B := (A \setminus B) \cup (B \setminus A)$.

In the proofs of convergence, we will continuously use subsequences, which not be relabelled.
 \subsection{Orlicz-Sobolev spaces}
 
 We follow the presentation in \cite{Carozza19}
 and refer the reader to \cite{KrRu61,Rao91,Rao02} for a comprehensive treatment.
 A function $A:[0,\infty)\to[0,\infty]$ is called a Young function if it is convex, non constant in $(0,\infty)$,
 and vanishes at $0$. Any function fulfilling these properties has the form 
 \begin{align}
  A(t)=\int_0^t a(r)\dd r\quad \text{for}\ t\geq 0,
 \end{align}
 for some non-decreasing, left-continuous function $a:[0,\infty)\to[0,\infty]$ which is neither identically $0$
 nor infinity. The function 
 \begin{align}
   \label{eq:Attincreases}
  t\mapsto \frac{A(t)}{t}\quad \text{is non-decreasing,}
 \end{align}
 and
 \begin{align}
  A(t)\leq a(t)t\leq A(2t)\quad \text{for}\ t\geq 0.
 \end{align}
 A Young function $A$ is said to satisfy the $\Delta_2$-condition near infinity
 if it is finite-valued and there exist constants $C>2$ and $t_0>0$ such that
 \begin{align}
  A(2t)\leq CA(t)\quad \text{for}\ t\geq t_0.
 \end{align}
 
 The Young conjugate $\widetilde A$ of $A$ is defined by 
 \begin{align}
  \widetilde A(t)=\sup\{st-A(s):s\geq 0\}\quad \text{for}\ t\geq 0.
 \end{align}
 It is known that $\widetilde{{\widetilde A}}=A$.

 An $N$-function $A$ is a convex function from $[0,\infty)$ into $[0,\infty)$ which vanishes 
 only at $0$ and such that $\lim_{s\to 0^+} \frac{A(s)}{s}=0$ and $\lim_{s\to \infty} \frac{A(s)}{s}=\infty.$

 Let $E$ be a measurable subset of $\R^n$.
 The Orlicz space $L^A(E)$ built upon a Young function $A$ is the Banach function space of those 
 real-valued measurable functions $u$ on $E$ for which the Luxemburg norm 
 \begin{align*}
  \|u\|_{L^A(E)} = \inf \left \{ \lambda>0: \int_E A\left ( \frac{|u|}{\lambda}\right ) \dd\vec x 
  \leq 1 \right \}
 \end{align*}
 is finite. Since $A$ is non-decreasing,
 \begin{align}
  \int_E A(|u|)\dd\vec x <\infty \ \Rightarrow\ \|u\|_{L^A(E)}\leq 1.
 \end{align}
 If $A$ satisfies the $\Delta_2$-condition at infinity then 
 \begin{align} \label{eq:slicing1}
  u\in L^A(E)\ \Leftrightarrow\ \int_E A(|u|)\dd\vec x <\infty.
 \end{align}
 
 Given an open set $\Omega\subset \R^n$ and a Young function $A$, the Orlicz-Sobolev space $W^{1,A}(\Omega)$
 is defined as 
 $$ W^{1,A}(\Omega) = \{u \in L^A(\Omega): u\ \text{is weakly differentiable, and}\ 
  |\nabla u| \in L^A(\Omega)\}.$$
 The space $W^{1,A}(\Omega)$, equipped with the norm given by 
 $$\|u\|_{W^{1,A}(\Omega)} = \|u\|_{L^A(\Omega)} + \|\nabla u\|_{L^A(\Omega, \R^n)}$$
 for $u\in W^{1,A}(\Omega)$, is a Banach space.
 
 \subsection{Lorentz spaces}
 
 Given a measure space $(X,\mu)$ and $1\leq q<p<\infty$,
 the \emph{distribution function} of a measurable function $u$ on $X$
 is defined by 
 \begin{align*}
  \omega(\alpha,u)=\mu(\{x\in X: |u(x)|>\alpha\}),\quad \alpha\geq 0.
 \end{align*}
 The \emph{nonincreasing rearrangement} $u^*$ of $u$ is defined by 
 \begin{align*}
  u^*(t)=\inf \{\alpha\geq 0: \omega(\alpha, u)\leq t\}.
 \end{align*}
 The Lorentz space $L^{p,q}(X)$ is defined as the class of all measurable functions on $X$
 for which the norm 
 \begin{align*}
  \|u\|_{L^{p,q}(X)}:= \left ( \int_0^{\mu(X)} (t^{1/p}u^*(t))^q\frac{\dd t}{t}\right )^{1/q}
 \end{align*}
 is finite. For more on Lorentz spaces see, e.g.\ \cite{Stein71}.

 \subsection{Approximate differentiability and geometric image} \label{subse:approximate}

 The \emph{density} $D(E,\vec x)$ of a measurable set 
$E \subset \Rn$ at an $\vec x \in \Rn$ is defined as
\[
 D(E,\vec x) := \lim_{r \searrow 0} \frac{\mc{L}^n (E \cap B(\vec x, r))}{\mc{L}^n (B(\vec x, r))} .
\]
 
The following notions are due to Federer \cite{Federer69} 
(see also \cite[Def.\ 2.3]{MuSp95} or \cite[Def.\ 4.31]{AmFuPa00}).

\begin{definition}\label{de:wapproximate}
Let $\vec u : \O \to \Rn$ be measurable function, and consider $\vec x_0 \in \O$.
\begin{enumerate}[a)]	
\item We say that the approximate limit of $\vec u$ at $\vec x_0$ is $\vec y_0$ when
\begin{align*}
	D\big (\{\vec x \in \Omega: |\vec u(\vec x)-\vec y_0|\geq \delta\},\vec x_0)=0
	\quad \text{for each}\ \delta>0.
\end{align*}
In this case, we write $\aplim_{\vec x\to\vec x_0} \vec u(\vec x)=\vec y]_0$.
We say that $\vec u$ is approximately continuous at $\vec x_0$ 
if $\vec u$ is defined at $\vec x_0$ and $\aplim_{\vec x\to\vec x_0} \vec u(\vec x)=\vec u(\vec x_0)$.

\item\label{item:AD} We say that $\vec u$ is approximately differentiable at $\vec x_0$ 
if $\vec u$ is approximately continuous at $\vec x_0$ and there exists $\vec F \in \Rnn$ such that
\[
  D \left( \left\{ \vec x \in \Omega \setminus \{ \vec x_0 \} 
: \frac{|\vec u (\vec x) - \vec u (\vec x_0) - \vec F (\vec x - \vec x_0)|}{|\vec x - \vec x_0|} \geq \d \right\} , \vec x_0 \right) = 0 \quad \text{for each}\ \delta>0.
\]
In this case, $\vec F$ is uniquely determined, 
called the approximate differential of $\vec u$ at $\vec x_0$, and denoted by $\nabla \vec u (\vec x_0)$.
\item	
We denote  the set of approximate differentiability points of $\vec u$ by $\O_d$, or, when we want to emphasize 
the dependence on $\vec u$, by $\O_{\vec u,d}$.
\end{enumerate}
\end{definition}

Given a measurable $\vec u : \O \to \Rn$ that is approximately differentiable a.e., for any $E \subset \Rn$ 
and $\vec y \in \Rn$, we denote by $\mc{N}_E (\vec y)$ the number of $\vec x \in \O_d \cap E$ 
such that $\vec u (\vec x) = \vec y$.
We will use the following version of Federer's \cite{Federer69} area formula, the formulation of which is taken 
from \cite[Prop.\ 2.6]{MuSp95}.

\begin{proposition}\label{pr:areaformula}
Let $\vec u : \O \to \Rn$ be measurable, approximately differentiable a.e\@.
Then, for any measurable set $E \subset \O$ and any measurable function $\f : \Rn \to \R$,
\[
 \int_E \f (\vec u (\vec x)) \left| \det D \vec u (\vec x) \right| \dd \vec x 
= \int_{\Rn} \f (\vec y) \, \mc{N}_E (\vec y) \, \dd \vec y,
\]
whenever either integral exists. 
Moreover, given $\psi: E \to \R$ measurable, the function $\bar{\psi}: \vec u (\Omega_d \cap E) \to \R$ defined by
\[
 \bar{\psi}(\vec y) := \sum_{\substack{\vec x \in \Omega_d \cap E \\ \vec u (\vec x) = \vec y}} \psi (\vec x)
\]
is measurable and satisfies
\[
 \int_E \psi (\vec x) \, \f (\vec u (\vec x)) \left| \det D \vec u (\vec x) \right| \dd \vec x 
= \int_{\vec u(\O_d \cap E)} \bar{\psi}(\vec y) \, \f (\vec y) \, \dd \vec y ,
\]
whenever the integral of the left-hand side exists.
\end{proposition}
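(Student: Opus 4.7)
The plan is to reduce the statement to the classical area formula for Lipschitz maps via a Whitney-type decomposition of approximately differentiable functions. First, I would invoke Federer's Lipschitz approximation theorem \cite[Thm.~3.1.8]{Federer69}: if $\vec u : \O \to \Rn$ is measurable and approximately differentiable at $\Ln$-a.e.\ point, then there exist pairwise disjoint measurable sets $\{E_k\}_{k \in \N}$ with $E_k \subset \O_d$, a family of globally Lipschitz maps $\vec u_k : \Rn \to \Rn$, and an $\Ln$-null set $N$ such that
\[
 \O = N \cup \bigcup_{k} E_k, \qquad \vec u|_{E_k} = \vec u_k|_{E_k}, \qquad \nabla \vec u(\vec x) = D \vec u_k(\vec x) \ \text{for $\Ln$-a.e.\ } \vec x \in E_k.
\]
This reduction is the central technical step; everything else is bookkeeping.

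Next, I would fix a measurable $E \subset \O$ and a measurable $\f : \Rn \to \R$, and apply the classical Lipschitz area formula to $\vec u_k$ on the measurable set $E \cap E_k$:
\[
 \int_{E \cap E_k} \f(\vec u(\vec x)) \, |\det \nabla \vec u(\vec x)| \dd \vec x = \int_{\Rn} \f(\vec y) \, \mc{H}^0\bigl(\vec u_k^{-1}(\vec y) \cap E \cap E_k\bigr) \dd \vec y.
\]
Since the $E_k$ are pairwise disjoint, $\mc{H}^0(\vec u_k^{-1}(\vec y) \cap E \cap E_k) = \#\{\vec x \in E \cap E_k : \vec u(\vec x) = \vec y\}$, and summing over $k$ gives
\[
 \mc{N}_E(\vec y) = \sum_k \mc{H}^0\bigl(\vec u_k^{-1}(\vec y) \cap E \cap E_k\bigr).
\]
Writing $\f = \f^+ - \f^-$ and applying monotone convergence (first to each sign separately) then yields
\[
 \int_{E \cap \O_d} \f(\vec u) \, |\det \nabla \vec u| \dd \vec x = \int_{\Rn} \f(\vec y) \, \mc{N}_E(\vec y) \dd \vec y,
\]
whenever either side makes sense, and the integral over $E \cap \O_d$ agrees with that over $E$ because $\Ln(\O \setminus \O_d) = 0$.

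For the weighted version, I would observe that $\bar\psi_k(\vec y) := \sum_{\vec x \in E \cap E_k,\ \vec u(\vec x) = \vec y} \psi(\vec x)$ is measurable as a consequence of the weighted Lipschitz area formula applied to $\vec u_k$ with multiplicity weight $\psi \chi_{E \cap E_k}$ (a standard fact; see, e.g., the remark following the Lipschitz area formula in \cite{Federer69}). Because the $E_k$ are disjoint, $\bar\psi = \sum_k \bar\psi_k$ pointwise on $\vec u(\O_d \cap E)$, hence $\bar\psi$ is measurable. Summing the identity
\[
 \int_{E \cap E_k} \psi(\vec x) \, \f(\vec u(\vec x)) \, |\det \nabla \vec u(\vec x)| \dd \vec x = \int_{\vec u_k(E \cap E_k)} \bar\psi_k(\vec y) \, \f(\vec y) \dd \vec y
\]
over $k$ and again invoking monotone convergence completes the proof.

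The main obstacle is purely the first step: one must verify that the notion of approximate differentiability in Definition \ref{de:wapproximate}\ref{item:AD} coincides with the one for which \cite[Thm.~3.1.8]{Federer69} supplies the countable Lipschitz decomposition (it does, because approximate differentiability in the density sense is equivalent to approximate differentiability in the sense of Federer under the additional approximate continuity assumption already built into the definition). Once this equivalence is invoked, the remainder is a routine reduction to the Lipschitz case plus a measurability and monotone-convergence argument.
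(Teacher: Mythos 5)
Your argument is correct and is exactly the standard proof behind this statement, which the paper does not prove but simply quotes from Federer \cite[Thms.\ 3.1.8 and 3.2.3]{Federer69} in the formulation of \cite[Prop.\ 2.6]{MuSp95}: countable decomposition into pieces where $\vec u$ agrees with Lipschitz maps, the classical Lipschitz area formula on each piece, and summation via monotone convergence. The only point to tighten is the decomposition itself: since $\mc{N}_E$ counts preimages in $\O_d \cap E$, you should take the sets $E_k$ to cover \emph{all} of $\O_d$ (which is what Federer's Thm.\ 3.1.8 actually provides), rather than $\O$ up to an $\Ln$-null set $N$, because a leftover null subset of $\O_d$ would otherwise require a separate argument that its points contribute nothing to $\mc{N}_E(\vec y)$ for a.e.\ $\vec y$.
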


We recall the definition of a.e.\ invertibility.
\begin{definition}
 \label{df:1-1ae}
	A function $\vec u:\Omega\to\R^n$ is said to be one-to-one a.e.\ in
a subset $E$ of $\Omega$
if there exists an $\mathcal L^n$-null subset $N$ of $E$ such that
$\vec u|_{E\setminus N}$ is one-to-one.
\end{definition}

Now we present the notion of the \emph{geometric image} of a set (see \cite{MuSp95,CoDeLe03,HeMo12}) in the context of Orlicz spaces.

\begin{definition}\label{de:O0}
Let $\vec u \in W^{1,A} (\O, \Rn)$ and suppose that $\det D\vec u (\vec x)\neq 0$ for a.e.\ $\vec x \in \O$.
Define $\O_0$ as the set of $\vec x \in \O$ for which the following are satisfied:
\begin{enumerate}[a)]
\item\label{item:1O0} $\vec u$ is approximately differentiable at $\vec x$ and
$\det \nabla \vec u (\vec x) \ne  0$; and
\item\label{item:2O0} there exist $\vec w \in C^1 (\Rn, \Rn)$ and a compact set $K \subset \O$ of density $1$ at $\vec x$ such 
that $\vec u|_K = \vec w|_K$ and $\nabla \vec u|_K = D \vec w|_K$.
\end{enumerate}
In order to emphasise the dependence on $\vec u$, the notation $\O_{\vec u, 0}$ will also be employed.
For any measurable set $E$ of $\O$, we define the geometric image of $E$ under $\vec u$ as $\vec u (E \cap \O_0)$, 
and denote it by $\imG (\vec u, E)$.
\end{definition}

The set $\O_0$ is of full measure in $\O$.
Indeed, the Calder\'on--Zygmund theorem shows that property \emph{\ref{item:1O0})} is satisfied a.e., while 
standard arguments, essentially due to Federer \cite[Thms.\ 3.1.8 and 3.1.16]{Federer69} (see also \cite[Prop. 2.4]{MuSp95} 
and \cite[Rk.\ 2.5]{CoDeLe03}), show that property \emph{\ref{item:2O0})} is also satisfied a.e.  
Note also that $\vec u$ is well defined at every $\vec x\in \Omega_0$, because of Definition 
\ref{de:wapproximate}\,\emph{\ref{item:AD})}.

We present the notion of tangential approximate differentiability (cf.\ \cite[Def.\ 3.2.16]{Federer69}).

\begin{definition}
Let $S \subset \R^n$ be a $C^1$ differentiable manifold of dimension $n-1$, and let $\vec x_0 \in S$.
Let $T_{\vec x_0} S$ be the linear tangent space of $S$ at $\vec x_0$.
A map $\vec u: S \to \R^n$ is said to be $\mathcal H^{n-1}\res S$-approximately differentiable at $\vec x_0$ 
if there exists $\vec L \in \Rnn$ such that for all $\d >0$,
\[
\lim_{r \searrow 0} \frac{1}{r^{n-1}} \, \mathcal{H}^{n-1} \left( \Big\{ \vec x \in S \cap B (\vec x_0, r) 
: \frac{|\vec u (\vec x) - \vec u (\vec x_0) - \vec L (\vec x - \vec x_0)|}{|\vec x - \vec x_0|} \geq \delta \Big\} \right) = 0 .
\]
In this case, the linear map $\vec L |_{T_{\vec x_0} S}: T_{\vec x_0} S\to \Rn$ is uniquely determined, called the tangential 
approximate derivative of $\vec u$ at $\vec x_0$, and is denoted by $\nabla \vec u (\vec x_0)$.
\end{definition}

 \subsection{Growth at infinity, continuity and Lusin's condition}
 
 The focus of this paper is on functions $A$ whose growth at infinity is at least such that
 \begin{align} \label{eq:L2logL}
 \int^\infty \left ( \frac{t}{A(t)}\right)^\frac{1}{n-2}\dd t < \infty.
 \end{align}
 The condition is satisfied, in particular, when $A(t)=t^p$ for every $p>n-1$
 and when $A(t)=t^{n-1}\log^\alpha t$ for every $\alpha>n-2$.
 
 Orlicz spaces are intermediate between $L^p$ spaces. In particular,  $L^{n-1}$ contains $L^{A}$
 for any $A$ satisfying \eqref{eq:L2logL}
 (see \cite{Peetre70} or \cite{Kufner}).
 
 As pointed out in \cite[Rmk.\ 3.2]{Carozza19}, condition \eqref{eq:L2logL} is enough to ensure that 
 maps defined on $(n-1)$-dimensional $C^1$ manifolds 
 and having $W^{1,A}$ regularity necessarily have a continuous representative
 and belong to the Lorentz space $L^{n-1,1}$.
 
 \begin{proposition}
   \label{pr:embedding}
  Let $S\subset \R^n$ be a $C^1$ differentiable manifold of dimension $n-1$.
  If an $N$-function $A$ satisfies \eqref{eq:L2logL} and the $\Delta_2$-condition 
  at infinity then every $u\in W^{1,A}(S)$
  has a continuous representative and $Du$
  is of class $L^{n-1,1}$.
  Moreover, 
  there exists a constant $C$, depending only on $A$, $S$, and $n$,
  such that $$\left \|u - \dashint_S u\, \dd\mathcal H^{n-1}\right \|_{L^\infty} \leq C\|Du\|_{L^A(S)}.$$ 
 \end{proposition}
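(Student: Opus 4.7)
The plan is to pull the statement back to $\R^{n-1}$ via a finite $C^1$ atlas on $S$ and then apply the sharp Orlicz--Sobolev embedding of Cianchi \cite{Cianchi96}. Assuming $S$ is (bounded, connected, and) $C^1$, fix a finite atlas $\{(\psi_i, V_i)\}_{i=1}^N$ in which each $\psi_i : V_i \subset \R^{n-1} \to U_i \subset S$ is a $C^1$ diffeomorphism onto a relatively open $U_i$ with $\bigcup_i U_i = S$. Uniform bounds on $\psi_i$, $\psi_i^{-1}$ and their derivatives make the pullbacks $u_i := u \circ \psi_i$ members of $W^{1,A}(V_i)$ with Luxemburg norms equivalent to those of $u|_{U_i}$, up to constants depending only on $S$ and the atlas.

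On each $V_i\subset \R^{n-1}$, Cianchi's theorem applied in dimension $m=n-1$ asserts that the condition $\int^\infty (t/A(t))^{1/(m-1)}\dd t < \infty$, which for $m=n-1$ is exactly \eqref{eq:L2logL}, is the precise borderline condition for the embedding $W^{1,A}(V_i)\hookrightarrow L^\infty(V_i)\cap C^0(V_i)$. Combined with the $\Delta_2$ assumption, the same condition characterizes the inclusion $L^A(V_i)\subset L^{n-1,1}(V_i)$ on sets of finite measure (this is the Orlicz description of $L^{n-1,1}$ recalled in the introduction). Hence each $u_i$ admits a continuous representative and $|D u_i|\in L^{n-1,1}(V_i)$. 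For the gluing, the continuous representatives from different charts agree $\mathcal H^{n-1}$-a.e.\ on overlaps and hence everywhere by continuity, so $u$ has a well-defined continuous representative on $S$; the Lorentz property transports under the bi-Lipschitz change of variables $\psi_i$, giving $Du\in L^{n-1,1}(S)$.

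To obtain the Poincaré-type estimate with the global mean, I would first use Cianchi's inequality on each chart to write $\|u_i - (u_i)_{V_i}\|_{L^\infty(V_i)} \leq C_i \|Du_i\|_{L^A(V_i)}$, and then replace the local means $(u_i)_{V_i}$ by the global mean $\dashint_S u\,\dd \mathcal H^{n-1}$ by chaining overlapping charts along a path in the connected manifold $S$: on every non-empty overlap $U_i\cap U_j$ the difference $(u_i)_{V_i} - (u_j)_{V_j}$ is controlled, via the two local $L^\infty$ bounds, by $\|Du\|_{L^A(U_i\cup U_j)} \le \|Du\|_{L^A(S)}$; a telescoping argument then produces the required bound. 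The main obstacle is precisely this last step: organising the chain-of-means argument so that a \emph{single} constant depending only on $A$, $S$, $n$ emerges, and correctly handling the non-additivity of the Luxemburg norm across the overlapping pieces. Once that is carried out, the inequality $\|u - \dashint_S u\, \dd\mathcal H^{n-1}\|_{L^\infty}\leq C\|Du\|_{L^A(S)}$ follows; the underlying Euclidean ingredient is a direct citation of Cianchi.
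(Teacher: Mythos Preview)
Your approach is essentially the same as the paper's: reduce via local charts to bounded open subsets of $\R^{n-1}$, invoke Cianchi's embedding \cite[Thm.~1b]{Cianchi96} for the continuous representative, and then use an Orlicz characterization to obtain the Lorentz regularity of $Du$. The only noteworthy differences are that the paper makes the Lorentz step concrete by applying \cite[Cor.~2.4]{Kauhanen99} to the explicit auxiliary function $\varphi(t)=(t/A(t))^{(n-1)/(n-2)}$ (rather than citing an inclusion $L^A\subset L^{n-1,1}$ abstractly), and that it does not spell out the chain-of-means argument for the Poincar\'e inequality at all, leaving the ``Moreover'' clause implicit in Cianchi's estimate.
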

 
 \begin{proof}
  Using local charts $S$ may be assumed, without loss of generality, to be a bounded open subset of $\R^{n-1}$.
  The embedding into $C(S)$ is proved in \cite[Thm.\ 1b]{Cianchi96} under the assumption that
  \begin{align}
    \label{eq:growth2}
   \int^\infty \frac{\widetilde A(t)}{t^{1+m'}}\dd t <\infty,
  \end{align}
  with $m=n-1$. By \cite[Lemma 2.3]{Cianchi04} applied to $\widetilde A$ and $q=m'$ (taking into account
  that $\widetilde{\widetilde A}=A$), condition \eqref{eq:growth2} is equivalent to \eqref{eq:L2logL}.
  
  Define 
  $$\varphi(t):=\left ( \frac{t}{A(t)} \right )^{\frac{n-1}{n-2}}.$$
  Note that $\varphi$ is non-increasing because of \eqref{eq:Attincreases}.
  Also, 
  $$\int_0^\infty \varphi^{\frac{1}{n-1}}(t)\dd t 
   = \int_0^\infty \left ( \frac{t}{A(t)}\right)^\frac{1}{n-2}\dd t$$
   and
   $$
   \int_{|Du|>0} |Du(\vec x)|\varphi^{\frac{1}{n-1}-1}(|Du(\vec x)|)\dd\vec x 
   =\int_{|Du|>0} A(|Du(\vec x)|)\dd\vec x <\infty.
   $$
   From \cite[Cor.\ 2.4]{Kauhanen99} it follows that $Du$
   is of class $L^{n-1,1}$.
 \end{proof}

 The following convention will be used throughout the paper.
\begin{convention}\label{co:convention}
If $\vec u:\Omega \to \R^n$ is measurable 
and $\vec u|_{\partial U}\in W^{1,A}(\partial U, \R^n)$ for some $C^1$ open set $U \ssubset \Omega$ and some 
$N$-function $A$ satisfying \eqref{eq:L2logL} and the $\Delta_2$-condition 
  at infinity,
then in expressions like $\vec u(\partial U)$ or $\vec u|_{\partial U}$ 
we shall be referring to the continuous representative of $\vec u|_{\partial U}$ in $W^{1,p}(\partial U, \R^n)$, which 
exists thanks to Proposition \ref{pr:embedding}.
Moreover, we will usually write $\vec u \in W^{1,A}(\partial U, \R^n)$ 
instead of $\vec u|_{\partial U}\in W^{1,A}(\partial U, \R^n)$.
\end{convention}

 Federer's change of variables formula for surface integrals \cite[Cor.\ 3.2.20]{Federer69}
(see also \cite[Prop.~2.7]{MuSp95} and \cite[Prop.~2.9]{HeMo12}), combined with Lusin's property for 
Sobolev maps with gradients in Lorentz spaces
proved by Kahuanen, Koskela \& Mal\'y \cite[Thm.\ C]{Kauhanen99}, will play an important role in the paper.
We will adopt the following formulation.

\begin{proposition} 
 \label{pr:215}
Let $A$ be an $N$-function satisfying \eqref{eq:L2logL} and the $\Delta_2$-condition 
  at infinity.
Suppose that $U$ is a $C^1$ open subset of $\Omega$, and $\vec u|_{\partial U}\in W^{1,A}(\partial U, \R^n)$.
Assume, further, that 
$\nabla (\vec u|_{\p U})(\vec x) = \nabla \vec u(\vec x) |_{T_{\vec x} \p U}$ 
for $\mathcal H^{n-1}$-a.e.\ $\vec x\in \p U$.
Then, for any $\mathcal H^{n-1}$-measurable subset $E\subset \partial U$,
\begin{equation*}
\mathcal H^{n-1}(\vec u(E)) = \int_E |(\cof \nabla \vec u(\vec x))\vecg\nu(\vec x)|\, \dd \mathcal H^{n-1}(\vec x),
\end{equation*}
where $\vecg\nu(\vec x)$ denotes the outward unit normal to $\partial U$ at $\vec x$.
\end{proposition}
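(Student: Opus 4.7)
The plan is to reduce the statement to a standard application of the area formula for maps from an $(n-1)$-dimensional rectifiable set to $\R^n$, with the key new ingredient being the verification of Lusin's condition (N) in the Orlicz setting via Proposition \ref{pr:embedding}. By a partition of unity subordinate to a $C^1$ atlas, I would first reduce to the case where $\partial U$ is a bounded open subset of $\R^{n-1}$, in which case the tangential derivative is simply the weak derivative of an Orlicz-Sobolev map on a Euclidean domain and $\vecg\nu$ plays the role of the fixed normal used to define the tangent plane.

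Next, I would apply Proposition \ref{pr:embedding} to $\vec u|_{\partial U}$: from the hypothesis \eqref{eq:L2logL} this gives both the continuous representative singled out by Convention \ref{co:convention} and the stronger conclusion that $\nabla(\vec u|_{\partial U}) \in L^{n-1,1}(\partial U, \Rnn)$. At this regularity level, Kauhanen, Koskela \& Mal\'y \cite[Thm.~C]{Kauhanen99} guarantees that the continuous representative satisfies Lusin's condition (N) with respect to $\mathcal{H}^{n-1}$, i.e.\ $\mathcal{H}^{n-1}(\vec u(N))=0$ whenever $\mathcal{H}^{n-1}(N)=0$; it is also approximately differentiable at $\mathcal{H}^{n-1}$-a.e.\ point of $\partial U$, and the approximate differential coincides with $\nabla(\vec u|_{\partial U})$.

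Armed with these two ingredients, I would invoke Federer's area formula in the form \cite[Cor.~3.2.20]{Federer69} (cf.\ the Sobolev-class versions used in \cite[Prop.~2.7]{MuSp95} and \cite[Prop.~2.9]{HeMo12}). Because Lusin (N) holds, the image $\vec u(E)$ is $\mathcal{H}^{n-1}$-measurable and null sets may be discarded, reducing the identity to the equality between $\mathcal{H}^{n-1}(\vec u(E))$ (read with the appropriate multiplicity) and the integral over $E$ of the tangential Jacobian $J^{\partial U}\vec u(\vec x) := \sqrt{\det\bigl(\nabla(\vec u|_{\partial U})(\vec x)^T\,\nabla(\vec u|_{\partial U})(\vec x)\bigr)}$, where $\nabla(\vec u|_{\partial U})$ is viewed as a linear map from $T_{\vec x}\partial U$ to $\R^n$.

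The final algebraic step is to identify $J^{\partial U}\vec u(\vec x)$ with $|(\cof \nabla \vec u(\vec x))\vecg\nu(\vec x)|$. Using the compatibility hypothesis $\nabla(\vec u|_{\partial U})(\vec x) = \nabla \vec u(\vec x)|_{T_{\vec x}\partial U}$, this reduces to a standard linear-algebra identity: for any $\vec F \in \Rnn$ and unit vector $\vecg\nu$, the Jacobian of the restriction $\vec F|_{\vecg\nu^\perp}\colon \vecg\nu^\perp \to \Rn$ equals $|(\cof \vec F)\vecg\nu|$ (a consequence of the Binet--Cauchy formula applied to an orthonormal basis of $\vecg\nu^\perp$ completed by $\vecg\nu$, together with the characterization of $\cof$ as minors).

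I expect the main obstacle to be Step 2, namely establishing Lusin's property (N) on an $(n-1)$-dimensional manifold in the Orlicz-Sobolev class: the whole point of the paper's choice of Young function is precisely that \eqref{eq:L2logL} is the sharp threshold placing $\nabla(\vec u|_{\partial U})$ into $L^{n-1,1}$, which is itself the sharp Lorentz scale for (N) in Kauhanen--Koskela--Mal\'y. Once this is secured, the remainder of the proof is a routine transcription of the classical $W^{1,p}$ argument from \cite{HeMo12} together with the pointwise identity for the tangential Jacobian.
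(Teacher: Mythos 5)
The paper offers no proof of Proposition~\ref{pr:215}; the text preceding it explicitly presents it as ``the following formulation'' of Federer's change-of-variables formula for surface integrals \cite[Cor.~3.2.20]{Federer69} (via its Sobolev versions \cite[Prop.~2.7]{MuSp95} and \cite[Prop.~2.9]{HeMo12}) combined with Kauhanen--Koskela--Mal\'y's Lusin property \cite[Thm.~C]{Kauhanen99}. Your proposal fills in precisely the details that citation elides --- local charts, Proposition~\ref{pr:embedding} to land $\nabla(\vec u|_{\p U})$ in $L^{n-1,1}$ and deduce condition~(N) together with $\mathcal H^{n-1}$-a.e.\ tangential approximate differentiability, Federer's area formula, and the standard identification of the tangential Jacobian with $|(\cof\nabla\vec u)\vecg\nu|$ --- so it follows essentially the same route the paper indicates, and you have correctly isolated the $L^{n-1,1}$ threshold as the only new ingredient relative to the classical $W^{1,p}$, $p>n-1$, setting.
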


\begin{remark}
 \label{re:MM}
\begin{enumerate}[a)]
\item By $\vec u(E)$ we refer to the image of $E$ by the continuous representative of $\vec u|_{\partial U}$ 
in $W^{1,A}(\partial U, \R^n)$, due to Convention \ref{co:convention}.
\item\label{low dimension image} We are mostly interested in the facts that $\mathcal H^{n-1}(\vec u(\partial U))<\infty$ 
and that $\mathcal H^{n-1}(\vec u(E))=0$ for every $\mathcal H^{n-1}$-null 
set $E\subset \partial U$. In particular, $\mathcal L^n(\vec u(\partial U))=0$, and 
$\vec u(\partial U) = \vec u(\partial U \cap \Omega_0)$ $\mathcal H^{n-1}$-a.e.\ 
if $\partial U \subset \Omega_0$ $\mathcal H^{n-1}$-a.e., where $\Omega_0$ is the set of Definition \ref{de:O0}.
\end{enumerate}
\end{remark}

\subsection{A class of good open sets}\label{subse:good}

In the following definition, given a nonempty open set $U \ssubset \O$ with a $C^2$ boundary, we call $d: \O \to \R$ 
the function given by
\[
 d(\vec x) := \begin{cases} \dist(\vec x, \p U) & \textrm{if}\ \vec x \in U \\
0 & \textrm{if}\ \vec x \in \p U \\
-\dist(\vec x, \p U) & \textrm{if}\ \vec x \in \Omega \setminus \bar U 
\end{cases}
\]
and
\begin{equation}\label{eq:Ut}
 U_t := \left\{ \vec x \in \O : d(\vec x)>t \right\} ,
\end{equation}
for each $t \in \R$.
We note (see, e.g., \cite[Th.\ 16.25.2]{Dieudonne72}, \cite[p.\ 112]{Sverak88} or \cite[p.\ 48]{MuSp95}) that there exists $\d>0$ such that for all $t \in (-\d,\d)$, the set $U_t$ is open, compactly contained 
in $\O$ and has a $C^2$ boundary.

\begin{definition}\label{df:good_open_sets}
Let $A$ be an $N$-function satisfying \eqref{eq:L2logL} and the $\Delta_2$-condition 
  at infinity.
Let $\vec u \in W^{1,A} (\O, \Rn)$.
We define $\mc{U}_{\vec u}$ as the family of nonempty open sets $U \ssubset \O$ with a $C^2$ boundary that satisfy the following conditions:
\begin{enumerate}[a)]
\item $\vec u|_{\p U} \in W^{1,A}(\p U, \Rn)$, and $(\cof \nabla \vec u)|_{\p U} \in L^1 (\p U, \Rnn)$.

\item $\p U \subset \Omega_0$ $\Hn$-a.e., where $\Omega_0$ is the set of Definition \ref{de:O0}, and $\nabla (\vec u|_{\p U})(\vec x) = \nabla \vec u(\vec x) |_{T_{\vec x} \p U}$ for $\mathcal H^{n-1}$-a.e.\ $\vec x\in \p U$.

\item $\displaystyle \lim_{\e \searrow 0} \dashint_0^{\e} \left| \int_{\p U_t} |\cof \nabla \vec u| \, \dd \mc{H}^{n-1} - \int_{\p U} |\cof \nabla \vec u| \, \dd \mc{H}^{n-1} \right| \dd t = 0 .$

\item For every $\vec g \in C^1 (\Rn, \Rn)$ with $(\adj D \vec u) (\vec g \circ \vec u) \in L^1_{\loc} (\O, \Rn)$,
\[
  \lim_{\e \searrow 0} \dashint_0^{\e} \left| \int_{\p U_t} \! \vec g (\vec u (\vec x)) \cdot \left( \cof \nabla \vec u (\vec x) \, \vecg \nu_t (\vec x) \right) \dd \mc{H}^{n-1} (\vec x) - \int_{\p U} \! \vec g (\vec u (\vec x)) \cdot \left( \cof \nabla \vec u (\vec x) \, \vecg \nu (\vec x) \right) \dd \mc{H}^{n-1} (\vec x) \right| \dd t = 0 ,
\]
where $\vecg \nu_t$ denotes the unit outward normal to $U_t$ for each $t \in (0, \e)$, and $\vecg \nu$ 
the unit outward normal to~$U$.
\end{enumerate}
\end{definition}

The following result can be proved as in \cite[Lemma 2.9]{MuSp95}.
It is a consequence of Fubini's theorem and 
the compact embedding of $W^{1,A}$
into the space of continuous functions (see \cite[Corollary 1]{Cianchi96},
which is proved for strongly Lipschitz domains and can be used in our setting,
via local charts, since the mainfolds $\partial U_t$ have no boundary).

\begin{lemma}
  \label{le:224}
  Let $A$ be an $N$-function satisfying \eqref{eq:L2logL} and the $\Delta_2$-condition 
  at infinity.
For each $j \in \N$ let $\vec u_j, \vec u \in W^{1,A} (\O, \Rn)$ satisfy $\vec u_j \weakc \vec u$ in $W^{1,A} (\O, \Rn)$ as $j \to \infty$.
Let $U \ssubset \O$ be an open set with a $C^2$ boundary.
Then there exists $\d>0$ such that for a.e.\ $t \in (-\d,\d)$,
\[
 \vec u_j , \vec u \in W^{1,A} (\p U_t, \Rn) \quad \text{for all } j \in \N
\]
and, for a subsequence (depending on $t$),
\[
 \vec u_j \to \vec u \quad \text{uniformly on } \p U_t \ \text{ as } j \to \infty .
\]
\end{lemma}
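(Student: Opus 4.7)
My strategy, parallel to \cite[Lemma~2.9]{MuSp95}, is to parametrize a two-sided tubular neighborhood of $\partial U$ by the signed distance function $d$, transfer the uniform $W^{1,A}(\Omega)$ bound to the slices $\partial U_t$ via the coarea formula, and then invoke Cianchi's compact embedding $W^{1,A}(\partial U_t)\hookrightarrow C(\partial U_t)$. By the $C^2$ regularity of $\partial U$, pick $\delta>0$ so that $d$ is $C^2$ with $|\nabla d|\equiv 1$ on $V_\delta:=\{|d|<\delta\}\ssubset\Omega$ and each $U_t$, $t\in(-\delta,\delta)$, has $C^2$ boundary. Weak convergence of $\vec u_j$ gives $M:=\sup_j\|\vec u_j\|_{W^{1,A}(\Omega)}<\infty$, and hence, from the definition of the Luxemburg norm, $\sup_j\int_\Omega[A(|\vec u_j|/M)+A(|D\vec u_j|/M)]\dd\vec x\le 2$. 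Because $|\nabla d|\equiv 1$ on $V_\delta$, the coarea formula yields
\begin{equation*}
\int_{-\delta}^{\delta}\int_{\partial U_t}\bigl[A(|\vec u_j|/M)+A(|D\vec u_j|/M)\bigr]\dd\mathcal H^{n-1}\dd t\le 2
\end{equation*}
uniformly in $j$, and likewise with $\vec u$ in place of $\vec u_j$. Intersecting countably many full-measure sets of good parameters (one for $\vec u$ and one per $\vec u_j$), I obtain that for a.e.\ $t\in(-\delta,\delta)$ every $\vec u_j|_{\partial U_t}$ (and $\vec u|_{\partial U_t}$) lies in $W^{1,A}(\partial U_t,\Rn)$, with its tangential derivative equal to the restriction of the ambient gradient to $T_{\vec x}\partial U_t$ by the standard slicing property of Sobolev maps.

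For such a good $t$, Fatou's lemma applied to the uniform slicing estimate produces a subsequence $j_k=j_k(t)$ along which $\vec u_{j_k}|_{\partial U_t}$ is bounded in $W^{1,A}(\partial U_t,\Rn)$. Proposition~\ref{pr:embedding} together with the compact form of Cianchi's embedding (\cite[Corollary~1]{Cianchi96}, transferred through local charts since $\partial U_t$ is a compact $C^2$ hypersurface without boundary) then yields a further subsequence converging uniformly on $\partial U_t$ to some continuous $\vec v$. To identify $\vec v$ with $\vec u|_{\partial U_t}$, I use that $W^{1,A}(\Omega)\hookrightarrow L^1(\Omega)$ compactly (Rellich--Kondrachov, via $W^{1,A}\subset W^{1,1}$ on the bounded domain $\Omega$), so $\vec u_j\to\vec u$ strongly in $L^1(V_\delta,\Rn)$. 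A second coarea slicing then gives $\int_{-\delta}^{\delta}\int_{\partial U_t}|\vec u_j-\vec u|\dd\mathcal H^{n-1}\dd t\to 0$, so, after a diagonal extraction, $\vec u_{j_k}\to\vec u$ in $L^1(\partial U_t,\Rn)$ for a.e.\ $t$; combined with the uniform convergence to $\vec v$, this forces $\vec v=\vec u|_{\partial U_t}$ $\mathcal H^{n-1}$-a.e., and hence everywhere by continuity of both maps on $\partial U_t$.

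The main technical point is bookkeeping the several notions of ``restriction of $\vec u$ to $\partial U_t$'' that appear---the Sobolev trace, the precise representative obtained from Fubini in the coarea identities, and the continuous representative from Proposition~\ref{pr:embedding}---and verifying that they agree on the full-measure set of $t$'s isolated in the first step. Once this consistency is in place, the nested subsequence extractions (Fatou to bound the $W^{1,A}(\partial U_t)$ norm, Cianchi's compact embedding to produce uniform convergence to some $\vec v$, and $L^1$ convergence to identify $\vec v=\vec u$) combine into a single subsequence depending on $t$ that converges to $\vec u$ uniformly on $\partial U_t$, which is precisely the conclusion.
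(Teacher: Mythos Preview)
Your proof is correct and follows essentially the same approach that the paper indicates: the paper does not give a detailed argument but simply states that the lemma ``can be proved as in \cite[Lemma~2.9]{MuSp95}'' and that it ``is a consequence of Fubini's theorem and the compact embedding of $W^{1,A}$ into the space of continuous functions (see \cite[Corollary~1]{Cianchi96}).'' Your coarea slicing to transfer the uniform $W^{1,A}(\Omega)$ bound to the hypersurfaces $\partial U_t$, the Fatou argument to extract a $t$-dependent subsequence bounded in $W^{1,A}(\partial U_t)$, Cianchi's compact embedding for the uniform convergence, and the identification of the limit via $L^1$ slicing are exactly the ingredients the paper alludes to, spelled out in full.
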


\subsection{Degree for Orlicz-Sobolev maps} \label{se:degree}

We assume that the reader has some familiarity with the topological degree for continuous functions (see, e.g.,
\cite{Deimling85,FoGa95book}).
Let $U$ be a bounded open set of $\Rn$ and let $\vecg \phi : \p U \to \Rn$ be continuous.
By Tietze's theorem, it admits a continuous extension $\tilde{\vecg \phi} : \bar{U} \to \Rn$.
We define the degree $\deg (\vecg \phi, U, \cdot) : \Rn \setminus \vecg \phi (\p U) \to \Z$ of $\vecg \phi$ on $U$ 
as the degree $\deg (\tilde{\vecg \phi}, U, \cdot) : \Rn \setminus \vecg \phi (\p U) \to \Z$ of $\tilde{\vecg \phi}$ on $U$.
This definition is consistent since the degree only depends on the boundary values (see, e.g., \cite[Th.\ 3.1 (d6)]{Deimling85}).

The following formula for the distributional derivative of the degree will be widely used
(see, e.g., \cite[Prop.\ 2.1]{MuSpTa96} or \cite[Prop.\ 2.1]{MuSp95}).

\begin{proposition}\label{prop:degdiv}
Let $A$ be an $N$-function satisfying \eqref{eq:L2logL} and the $\Delta_2$-condition 
  at infinity.
Let $U \subset \Rn$ be a $C^1$ open set.
Suppose that $\vec u$ is the continuous representative of a function in $W^{1,A} (\p U, \Rn)$.
Then, for all $\vec g \in C^1 (\Rn, \Rn)$,
\[
 \int_{\p U} \vec g (\vec u (\vec x)) \cdot \left( \cof D \vec u (\vec x) \, \vecg \nu (\vec x) \right) \dd \mc{H}^{n-1} (\vec x) = \int_{\Rn} \div \vec g (\vec y) \deg (\vec u, U, \vec y) \, \dd \vec y ,
\]
where $\vecg \nu$ is the unit outward normal to $U$.
\end{proposition}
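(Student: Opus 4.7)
The plan is to reduce to the smooth case by approximation of the boundary trace on $\p U$. If $\vec u$ admits a $C^1$ extension to $\bar U$, the Piola identity $\sum_j \p_j (\cof D\vec u)_{ij}=0$ combined with the chain rule yields the pointwise divergence identity $\sum_{i,j} \p_j[g_i(\vec u)(\cof D\vec u)_{ij}]=(\div \vec g)(\vec u)\det D\vec u$; integrating over $U$ and applying the divergence theorem produces
\[
\int_{\p U}\vec g(\vec u)\cdot((\cof D\vec u)\vecg\nu)\,\dd\mathcal H^{n-1}=\int_U(\div\vec g)(\vec u)\det D\vec u\,\dd\vec x,
\]
and the classical area formula converts the right-hand side into $\int_{\R^n}\div\vec g(\vec y)\deg(\vec u,U,\vec y)\,\dd\vec y$, establishing Proposition \ref{prop:degdiv} when $\vec u$ is smooth.

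For the general case I would approximate. Under the $\Delta_2$-condition, smooth functions are dense in $W^{1,A}(\p U,\R^n)$ (working in local charts and mollifying), so one may choose $\vec u_k\in C^\infty(\p U,\R^n)$ with $\vec u_k\to\vec u$ in $W^{1,A}(\p U,\R^n)$. Proposition \ref{pr:embedding} then yields $\vec u_k\to\vec u$ uniformly on $\p U$ and $D\vec u_k\to D\vec u$ in $L^{n-1,1}(\p U)\subset L^{n-1}(\p U)$; since $\cof$ is a polynomial of degree $n-1$ in the matrix entries, this forces $\cof D\vec u_k\to \cof D\vec u$ in $L^1(\p U,\R^{n\times n})$. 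Extending each $\vec u_k$ to a smooth map on a neighborhood of $\bar U$ (for instance by harmonic extension), the identity of the first paragraph applies to every $\vec u_k$.

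It remains to pass to the limit on each side. The left-hand side is immediate: uniform convergence of $\vec u_k$ combined with the $L^1$ convergence of $\cof D\vec u_k$ produces the desired limit. On the right-hand side, $\vec u(\p U)$ is $\mathcal L^n$-null by Proposition \ref{pr:215} (see also Remark \ref{re:MM}\,\emph{\ref{low dimension image})}), and the homotopy invariance of the topological degree, applied to the linear homotopy between $\vec u_k|_{\p U}$ and $\vec u|_{\p U}$ (which for $k$ large avoids any fixed $\vec y\notin\vec u(\p U)$ by uniform convergence), gives $\deg(\vec u_k,U,\vec y)\to\deg(\vec u,U,\vec y)$ for $\mathcal L^n$-a.e.\ $\vec y\in\R^n$.

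The main obstacle is justifying the passage under the integral sign on the right-hand side. The inequality
\[
\left|\int_{\R^n}\div\vec g(\vec y)\deg(\vec u_k,U,\vec y)\,\dd\vec y\right|\leq\|\vec g\|_{\infty}\,\|\cof D\vec u_k\|_{L^1(\p U)}\leq C,
\]
valid for every $\vec g\in C^1_c(\R^n,\R^n)$ and read directly from the smooth identity applied to $\vec u_k$, shows that the distributional gradient of $\deg(\vec u_k,U,\cdot)$ is a Radon measure with uniformly bounded total variation. Since the supports lie in a fixed ball (containing $\bigcup_k\vec u_k(\p U)$), the Sobolev embedding $BV(\R^n)\hookrightarrow L^{n/(n-1)}(\R^n)$ produces a uniform $L^{n/(n-1)}$ bound, and combined with the pointwise a.e.\ convergence, Vitali's convergence theorem upgrades this to convergence in $L^1(\R^n)$. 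The right-hand side accordingly converges, and equating the two limits yields the identity for $\vec u$.
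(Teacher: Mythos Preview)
Your argument is correct and essentially reconstructs, in the Orlicz setting, the proof of the very result the paper cites. The paper's proof is considerably shorter: it simply invokes \cite[Prop.\ 2.1, Rmk.\ 2]{MuSp95}, which states that the identity holds for any $\vec u\in W^{1,n-1}(\p U,\R^n)$ that has a continuous representative and satisfies $\mathcal L^n(\vec u(\p U))=0$, and then checks these three hypotheses using the inclusion $L^A(\p U)\subset L^{n-1}(\p U)$, Proposition~\ref{pr:embedding}, and Remark~\ref{re:MM}\,\emph{\ref{low dimension image})}.

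Your route, by contrast, is self-contained: you prove the smooth case via Piola's identity and then pass to the limit. The approximation step works because $\Delta_2$ gives density of smooth maps in $W^{1,A}(\p U,\R^n)$, and the left side converges thanks to uniform convergence of $\vec u_k$ and $L^1$ convergence of the cofactors (here $L^{n-1}$ convergence of the gradients, coming from $L^A\hookrightarrow L^{n-1}$, suffices; the $L^{n-1,1}$ claim is not needed). The treatment of the right side via a uniform $BV$ bound on $\deg(\vec u_k,U,\cdot)$, the compact embedding into $L^{n/(n-1)}$, and Vitali is valid and is in fact the mechanism behind the \cite{MuSp95} result. What your approach buys is independence from that reference; what the paper's approach buys is brevity, since the reduction to the $W^{1,n-1}$ case is immediate once the three hypotheses are verified.
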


\begin{proof}
 As mentioned in \cite[Prop.\ 2.1, Rmk.\ 2]{MuSp95}, for the formula to be valid is 
enough to know that $\vec u\in W^{1,n-1}(\p U, \Rn)$, that $\vec u$ has a continuous
representative and that $\mathcal L^n(\vec u(\p U))=0$.
That $W^{1,A}(\p U, \Rn)\subset W^{1,n-1}(\p U, \Rn)$ follows from
the fact that $L^A(\p U)
\subset L^{n-1}(\p U)$.
Functions in $W^{1,A}(\p U, \Rn)$ satisfy the remaining two conditions thanks 
again to 
Proposition \ref{pr:embedding} and Remark \ref{re:MM}.(\ref{low dimension image}).
\end{proof}

The concept of topological image was introduced by \v{S}ver\'ak \cite{Sverak88} (see also \cite{MuSp95}).

\begin{definition}\label{def:INV_CDL}
Let $A$ be an $N$-function satisfying \eqref{eq:L2logL}
and let $U \ssubset \Rn$ be a nonempty open set with a $C^1$ boundary.
If $\vec u \in W^{1,A}(\p U, \Rn)$, we define $\imT(\vec u, U)$, the topological image of $U$ under 
$\vec u$, as the set of $\vec y \in \Rn \setminus \vec u (\p U)$ such that $\deg(\vec u, U, \vec y) \neq 0$.
\end{definition}

Due to the continuity of $\deg (\vec u, U, \vec y)$ with respect to $\vec y$,
the set $\imT(\vec u, U)$ is open
and $\partial \imT(\vec u, U) \subset \vec u(\partial U)$. In addition, as $\deg(\vec u, U, \cdot)$ is zero 
in the unbounded component of $\Rn \setminus \vec u (\p U)$ (see, e.g., \cite[Sect.\ 5.1]{Deimling85}), 
it follows that $\imT (\vec u, U)$ is bounded.

\subsection{Distributional determinant}
We present the definition of distributional determinant (see \cite{Ball77} or \cite{Muller90CRAS}).
With $\langle \cdot , \cdot \rangle$ we indicate the duality product between a distribution and a smooth function.

\begin{definition}\label{de:Det}
Let $\vec u \in W^{1,1} (\O, \Rn)$ satisfy $(\adj D \vec u) \, \vec u \in L^1_{\loc} (\O, \Rn)$.
The distributional determinant of $\vec u$ is the distribution $\Det D \vec u$ defined as
\[
 \langle \Det D\vec u , \phi \rangle := -\frac{1}{n} \int_\Omega \vec u(\vec x) \cdot (\cof D\vec u(\vec x)) \, D\phi(\vec x) \, \dd \vec x, \qquad \phi\in C_c^\infty(\Omega) .
\]
\end{definition}

\subsection{Surface energy}
 
 The following concepts were defined in \cite{HeMo10}:
 \begin{definition}\label{def:SE}
Let $\vec u : \O \to \Rn$ be measurable and approximately differentiable a.e\@.
Suppose that $\det \nabla \vec u \in L^1_{\loc} (\O)$ and $\cof \nabla \vec u \in L^1_{\loc} (\O,\Rnn)$.
For every $\vec f \in C^1_c (\O \times \Rn,\Rn)$, define
\begin{equation}\label{eq:Euf}
 \mc{E} (\vec u, \vec f) := \int_{\O} \left[ \cof \nabla \vec u (\vec x) \cdot D \vec f (\vec x, \vec u (\vec x)) + \det \nabla \vec u (\vec x) \div \vec f (\vec x, \vec u (\vec x))  \right] \dd \vec x
\end{equation}
and
\[
 \mc{E} (\vec u) := \sup \left\{ \mc{E} (\vec u, \vec f) : \ \vec f \in C^1_c (\O \times \Rn,\Rn), \ \| \vec f \|_{\infty} \leq 1 \right\} .
\]
\end{definition}
In equation \eqref{eq:Euf}, $D \vec f (\vec x, \vec y)$ denotes the derivative of $\vec f (\cdot, \vec y)$ 
evaluated at $\vec x$, while $\div \vec f (\vec x, \vec y)$ is the divergence of $\vec f (\vec x, \cdot)$ 
evaluated at $\vec y$.

It was proved in \cite{HeMo11,HeMo12} that if $\vec u$ is one-to-one a.e., $\det \nabla \vec u>0$ a.e.\ 
and $\mathcal E(\vec u)<\infty$ then 
$$\mathcal E(\vec u)= \mathcal H^{n-1}(\Gamma_V(\vec u)) + 2\mathcal H^{n-1}(\Gamma_I(\vec u)),$$
where $\Gamma_V(\vec u)$ and $\Gamma_I(\vec u)$ are $(n-1)$-rectifiable sets, defined as follows:
\begin{itemize}
 \item A point $\vec y_0$ belongs to $\Gamma_V(\vec u)$ if the approximate limit of $\vec u^{-1}(\vec y)$ as $\vec y$
approches $\vec y_0$ from one side of $\Gamma_V(\vec u)$ lies in the interior of $\Omega$, and either there
are almost no points of $\imG(\vec u, \Omega)$ on the other side of $\Gamma_V(\vec u)$ or the approximate limit
of $\vec u^{-1}(\vec y)$ coming from the other side lies on the boundary of $\Omega$.
 \item A point $\vec y_0$ belongs to $\Gamma_I(\vec u)$ if the approximate limits of $\vec u^{-1}(\vec y)$ coming from
 the two sides of $\Gamma_I(\vec u)$ exist, are different, and both lie in the interior of $\Omega$.
\end{itemize}
The motivation there was the modelling of fracture, context in which $\Gamma_V(\vec u) \cup \Gamma_I(\vec u)$
corresponds to the surface created by the deformation, as seen in the deformed configuration. 
In that case $\mathcal E(\vec u)$ gives the area of this created surface.

\subsection{Weak monotonicity}
  The following definition of weak monotonicity was introduced by Manfredi \cite{Manfredi94}
  (see, e.g., \cite{VoGo76} for earlier related definitions; the subscript $+$ stands for positive part).
  
  \begin{definition}
   \label{de:monotone}
    A function $u\in W^{1,1}_{\loc}(\Omega)$ is called weakly monotone if, 
    for every open set $\Omega' \ssubset \Omega$, and every $m,M\in\R$,
    such that $m\leq M$ and $$(u-M)_+-(m-u)_+\in W^{1,1}_0(\Omega'),$$
    one has that $$m\leq u \leq M\quad \text{a.e.\ in}\ \Omega'.$$
  \end{definition}
  
  The definition asks for a weak version of the minimum and maximum principle
  to be satisfied for every open $\Omega'\ssubset \Omega$. 
  We shall work with maps where that minimum and maximium principles are satisfied
  only for open sets in $\mathcal U_{\vec u}$;
  in particular, given any $\vec x$ in $\Omega$ we will only be able to assume
  that they hold for a.e.\ $r\in (0, \dist (\vec x, \partial \Omega))$ and not for every such radius.
  This possibility was taken into account in the notion of weak pseudomonotonicity 
  of Hajlasz \& Mal\'y \cite{HaMa02}
  (which, in fact, is more general than what we need: we will only consider the case $K=1$).
  
  \begin{definition}
   \label{de:pseudomonotone}
   A map $u\in W^{1,1}(\Omega)$ is said to be weakly $K$-pseudomonotone, $K\geq 1$,
   if for every $\vec x\in \Omega$ and a.e.\ $0<r<\dist(\vec x, \partial \Omega)$,
   $$\essosc_{B(\vec x,r)} u \leq K \essosc_{S(\vec x, r)} u,$$
   where the oscillation on the left is essential with respect to the Lebesgue measure and
   the oscillation on the right is essential with respect to the $(n-1)$-dimensional Hausdorff
   measure.
  \end{definition}

\section{$H^1$-continuity of pseudomonotone Orlicz-Sobolev maps} \label{se:H1continuity}
In the paper \cite{Carozza19} the authors develop continuity properties of weakly 
monotone Orlicz Sobolev functions. In our analysis, 
we improve their estimate concerning the Hausdorff dimension of points where the function is not continuous.
Also, since in the following sections this estimate will be needed for maps
whose restrictions to balls $B(\vec x, r)$ we will only be able to prove 
that satisfy the weak minimum and maximum principles for a.e.\ $r$ (instead of for every $r$),
we show that their arguments remain valid under this milder monotonicity 
condition. 
We take the chance for a slight generalization and obtain the oscillation estimates
assuming only that the maps are pseudomonotone.

Given a continuous, increasing function $h:[0,\infty) \to [0,\infty)$
such that $h(0)=0$, the $h$-Hausdorff measure $\mc{H}^{h(\cdot)}(E)$ of a set $E\subset \Rn$
is defined as 
\begin{align}
 \mc{H}^{h(\cdot)}(E) = \lim_{\delta\searrow 0}
  \inf \Big \{ \sum_{j=1}^\infty h(\diam(K_j)):\ 
  E\subset \bigcup_{j=1}^\infty K_j,\ 
  \diam(K_j)\leq \delta \Big \}.
\end{align}

\begin{lemma}
 \label{le:Hh}
Let $A$ be an $N$-function satisfying \eqref{eq:L2logL}
and the $\Delta_2$-condition at infinity.
Set  
\begin{align}
 \label{eq:h}
 h(r):= \int_0^r t^{n-1} A_{n-1}\left(\frac1t\right) \dd t,
\end{align}
where 
$A_{n-1}$ is the Young function given by
 \begin{align}\label{an-1}
  A_{n-1}(t):=
               \left ( t^{\frac{n-1}{n-2}} \int_t^\infty 
		\frac{\widetilde A(r)}{r^{1+\frac{n-1}{n-2}}}\dd r \right )^{\widetilde{ }}.
 \end{align}
For every $f\in L^1(\Omega)$
\begin{align}
 \mc{H}^{h(\cdot)}( \{ \vec x_0\in \Omega: \limsup_{r\searrow 0} 
  \frac{\int_{\Omega} |f|\dd\vec x}{h(r)} >0 \} ) =0.
\end{align}
\end{lemma}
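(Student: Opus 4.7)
The plan is to adapt the classical Frostman/Carleson density argument: for a finite Radon measure $\mu$, the set where the upper $h$-density of $\mu$ is positive has $\mc H^{h(\cdot)}$-measure zero. Interpreting the numerator in the statement as $\int_{B(\vec x_0, r)}|f|\dd\vec x$ (the printed expression has no $r$ or $\vec x_0$ dependence, so it must be a typo) and taking $\mu:=|f|\Ln\res\Omega$, it suffices, by a countable union over $t=1/k$, to show that for each fixed $t>0$ the set
\[
E_t := \{\vec x_0\in\Omega : \limsup_{r\searrow 0}\mu(B(\vec x_0, r))/h(r) > t\}
\]
satisfies $\mc H^{h(\cdot)}(E_t)=0$.

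Two preliminary facts about $h$ are the technical heart of the argument. First, $r^n=o(h(r))$ as $r\searrow 0$: the substitution $s=1/t$ in \eqref{eq:h} gives $h(r)=\int_{1/r}^\infty A_{n-1}(s) s^{-(n+1)} \dd s$, and the super-linear growth at infinity of the $N$-function $A_{n-1}$ forces $h(r)/r^n\to\infty$. Second, $h$ enjoys a doubling property $h(2r)\le C_0 h(r)$ for small $r>0$, inherited from the $\Delta_2$ condition that $A_{n-1}$ inherits from $A$ through the formula \eqref{an-1}. The first fact, combined with Lebesgue's differentiation theorem, implies $\Ln(E_t)=0$: at any Lebesgue point of $|f|$ one has $\mu(B(\vec x_0, r))\le (|f(\vec x_0)|+1)\,\Ln(B(\vec x_0,r))\lesssim r^n$ for $r$ small, so the first fact forces $\mu(B(\vec x_0, r))/h(r)\to 0$. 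Since $\mu\ll\Ln$, we then obtain $\mu(E_t)=0$, and by outer regularity of the Radon measure $\mu$, for every $\e>0$ there exists an open $V\supset E_t$ with $\mu(V)<\e$.

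The covering argument is then routine. Given any $\delta>0$, at each $\vec x_0\in E_t$ pick a radius $r<\delta$ small enough that $B(\vec x_0, r)\subset V$ and $\mu(B(\vec x_0, r))>t\,h(r)$; Vitali's $5r$-covering lemma extracts a countable disjoint subfamily $\{B(\vec x_i, r_i)\}_i$ with $E_t\subset\bigcup_i B(\vec x_i, 5r_i)$. Iterating the doubling property a few times gives $h(10r_i)\le C h(r_i)$, so that
\[
\sum_i h(10 r_i) \le C \sum_i h(r_i) \le \frac{C}{t}\sum_i \mu(B(\vec x_i, r_i)) \le \frac{C}{t}\mu(V) < \frac{C\e}{t} .
\]
Since each covering set $\overline{B(\vec x_i, 5r_i)}$ has diameter at most $10\delta$, this bounds $\mc H^{h(\cdot)}_{10\delta}(E_t)$ by $C\e/t$; letting first $\delta\searrow 0$ and then $\e\searrow 0$ yields $\mc H^{h(\cdot)}(E_t)=0$, as desired.

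The main obstacle is expected to be the verification that $A_{n-1}$ inherits the $\Delta_2$ condition at infinity from $A$ (and hence that $h$ is doubling), since the defining formula~\eqref{an-1} involves the Young conjugate of a function built from an integral of $\widetilde A$. Tracing $\Delta_2$ through this chain is a standard, if delicate, Orlicz-space manipulation of the type underlying the technology of \cite{Cianchi96,Cianchi04,Kauhanen99}; once this is granted, the remainder is a classical density estimate plus a Vitali covering.
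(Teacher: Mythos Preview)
Your argument is correct and follows the same route as the paper (Lebesgue points are excluded from $E_t$, outer regularity of $|f|\,\Ln$, then a Vitali $5r$-covering combined with doubling of $h$). One simplification worth noting: the doubling $h(5r)\le 5^n h(r)$ does not require $\Delta_2$ for $A_{n-1}$---since $A_{n-1}$ is increasing, the substitution $t=5s$ gives $h(5r)=5^n\int_0^r s^{n-1}A_{n-1}(1/(5s))\,\dd s\le 5^n h(r)$ directly---so what you flag as the main obstacle is in fact immediate.
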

 
\begin{proof}
 We will follow \cite[Thm.\ 2.4.3.3]{ EvGa92}.
 Let us show first that 
 $$E:=\{ \vec x_0\in \Omega: \limsup_{r\searrow 0} 
  \frac{\int_{B(\vec x_0, r)} |f|\dd\vec x}{h(r)} >0 \} $$
 does not contain any Lebesgue-Haussdorff point of $f$.
 Indeed,
 \begin{align}
  \label{eq:H1Hh}
  \forall r\in (0,1):\ h(r)\geq A_{n-1}(1) r
 \end{align}
 because $t \to t^{n-1} A_{n-1} \left ( \frac{ 1} {t}\right )$ is decreasing
 \cite[Eq.\ (4.16)]{Carozza19}.
 Hence, if $\vec x_0$ is a Lebesgue point of $f$ then 
 \begin{align*}
  \limsup_{r\searrow 0}\frac{\int_{ B(\vec x_0, r)} |f|\dd x }{h(r)}
  \leq \limsup_{r\searrow 0} \dashint_{ B(\vec x_0, r)} |f|\dd x \cdot \frac{Cr^{n}}{A_{n-1}(1)r} 
 = 0.
 \end{align*}
 As a consequence, for all $\sigma>0$ we can find an open set 
 $U \subset \Omega$ such that $U\subset E$ and $\int_{U} |f(x)| \dd x<\sigma$,
 using the absolute continuity of the density $|f(x)|$. 
 Fix $\e>0$, and define
\begin{align}
E^{\e} \colon= \{x_0\in \Omega: \limsup_{r\to 0} \frac{\int_{ B(\vec x_0, r)} |f(x)|\dd x }{h(r)}>\e \}.
\end{align}
We will prove that $\mathcal H^{h(\cdot)} (E^{\epsilon})=0.$ 
By Vitali's covering theorem, 
for any $\delta>0$ there exist disjoint balls $(B_i)_{i\in \mathbb{N}}$ 
such that $E^{\e}\subset \bigcup_{i\in \mathbb{N}} 5B_i$, $B_i \subset U$, $r_i=\diam(B_i)<\delta$, 
$\int_{B_i}|f| \dd x>\e h(r_i)$.
Using that $A_{n-1}$ is increasing and the definition of $h(r)$ it is straightforward to show that $h(5r)\le 5^n h(r),\forall r>0.$
We then proceed in the estimate:
\begin{equation}
\mathcal H^{h(\cdot)} (E^{\e}) \leq \sum_{i=1}^{\infty} h(5 r_i )
\leq 5^n \sum_{i=1}^{\infty} \frac{\int_{B_i } |f| \dd x}{\e}<\frac{5^n \sigma}{\e}.
\end{equation}
The conclusion follows by letting $\delta \to 0$ and then $\sigma \to 0$. Since $\mathcal H^{h(\cdot)} (E^{\epsilon})=0, \forall \epsilon >0$, we conclude that $\mathcal H^{h(\cdot)} (E)=0.$
\end{proof}

We remark that the weak minimum and maximum principle holds a.e.\  (see Prop. 5.5 in \cite{BHM17}).
We would like to apply the estimate as in \cite[Lemma 7.4.1]{IwMa01}
in order to obtain the following 
 Orlicz version of Gehring oscillation estimate (\cite[Lemma 7.4.2]{IwMa01}):
\begin{equation} 
\label{eq:osc}
  \begin{gathered}
  \textit{If $\vec a$ and $\vec b$ are  Lebesgue points of $f$ 
  and $B(\vec x_0, r)\ssubset \Omega$ is any ball containing $\vec a$ and $\vec b$,}\\
  \textit{then, for a.e.\ }t\in (r,\dist(\vec x_0, \partial \Omega))\textit{,}
  \ 
t^{n-1}A_{n-1}\Big(\frac{|f(\vec a)-f(\vec b)|}{CKt}\Big) \le  \int_{S_t(\vec x_0)} A(|\nabla f|) d \mathcal{H}^{n-1}.
  \end{gathered}
\end{equation}

\begin{proposition}
Let $A$ be a Young function that fulfills condition \eqref {eq:L2logL} for $n\ge 3$. 
Let $A_{n-1}$ be the function defined in \eqref{an-1}.
If $f\in W^{1,A}_{loc}(\Omega)$ and is $K$-pseudomonotone
then \eqref{eq:osc} holds.
\end{proposition}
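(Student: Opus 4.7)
The strategy follows the template of Gehring's oscillation estimate in the form presented by Iwaniec--Martin \cite[Lemma 7.4.2]{IwMa01}, adapted to the Orlicz setting. There are two ingredients: (i) $K$-pseudomonotonicity is used to transfer the oscillation estimate from the ball to its boundary sphere, and (ii) a sharp Orlicz--Sobolev embedding on the $(n-1)$-dimensional sphere, tailored to the Young function $A_{n-1}$ of \eqref{an-1}, converts the sphere oscillation into an integral bound for $A(|\nabla f|)$.

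For ingredient (i), since $\vec a$ and $\vec b$ lie in $B(\vec x_0, r)\subset B(\vec x_0, t)$ for $t>r$ and are Lebesgue points of $f$, one has $|f(\vec a)-f(\vec b)|\le\essosc_{B(\vec x_0, t)}f$, and the $K$-pseudomonotonicity hypothesis (Definition \ref{de:pseudomonotone}) upgrades this, for a.e.\ $t\in(r,\dist(\vec x_0,\p\O))$, to
\[
 |f(\vec a)-f(\vec b)|\le K\,\essosc_{S_t(\vec x_0)} f.
\]
A Fubini-slicing argument as in Lemma \ref{le:224} ensures that, for a.e.\ such $t$, the restriction $f|_{S_t}$ belongs to $W^{1,A}(S_t)$ with $\int_{S_t}A(|\nabla_\tau f|)\,\dd\mathcal H^{n-1}<\infty$ (where $\nabla_\tau$ is the tangential gradient), and Proposition \ref{pr:embedding} supplies a continuous representative on $S_t$, so essential and pointwise oscillations on the sphere agree.

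For ingredient (ii), I would first establish on the unit sphere the sharp Orlicz embedding
\[
 A_{n-1}\!\left(\frac{\osc g}{C}\right)\le C\int_{S^{n-1}} A(|\nabla g|)\,\dd\mathcal H^{n-1},\qquad g\in W^{1,A}(S^{n-1}),
\]
which is the oscillation form of the Cianchi-type result underlying Proposition \ref{pr:embedding}; the function $A_{n-1}$ in \eqref{an-1} is precisely the Young conjugate that optimises in this embedding (cf.\ \cite[Lemma 2.3]{Cianchi04} and the computation in the proof of Proposition \ref{pr:embedding}). Applying this inequality not on $S_t$ directly but to the rescaled map $\tilde g(\vecg\omega):=f(\vec x_0+t\vecg\omega)/t$ on $S^{n-1}$ (for which $\osc\tilde g=t^{-1}\essosc_{S_t}f$, $|\nabla\tilde g(\vecg\omega)|=|\nabla_\tau f(\vec x_0+t\vecg\omega)|$, and $\dd\mathcal H^{n-1}=t^{n-1}\dd\sigma$) and changing variables yields the scaled form
\[
 t^{n-1}A_{n-1}\!\left(\frac{\essosc_{S_t} f}{Ct}\right)\le C\int_{S_t}A(|\nabla_\tau f|)\,\dd\mathcal H^{n-1}\le C\int_{S_t}A(|\nabla f|)\,\dd\mathcal H^{n-1}.
\]
Combining with ingredient (i) and using that $A_{n-1}$ is non-decreasing (a consequence of \eqref{eq:Attincreases}) gives \eqref{eq:osc}, after absorbing constants.

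The main obstacle is ingredient (ii): producing the sphere inequality with the exact Young function $A_{n-1}$ of \eqref{an-1} on the left and $A$ on the right, and verifying that the rescaling scheme above gives exactly the $t$-scaling prescribed by \eqref{eq:osc}. This is where the condition \eqref{eq:L2logL} is essential, since it identifies $A_{n-1}$ as the correct modulus-of-continuity function (via the duality argument used in Proposition \ref{pr:embedding}) and guarantees that $\int_0 t^{n-1}A_{n-1}(1/t)\,\dd t<\infty$, which is in turn what makes the Hausdorff-measure argument of Lemma \ref{le:Hh} compatible with the oscillation estimate.
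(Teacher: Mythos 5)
Your proof follows the same two-ingredient strategy as the paper's: (i) use $K$-pseudomonotonicity together with the Lebesgue-point hypothesis on $\vec a,\vec b$ to dominate $|f(\vec a)-f(\vec b)|$ by $K\essosc_{S_t}f$ for a.e.\ $t\in(r,R)$, then (ii) apply an Orlicz Poincar\'e--Sobolev oscillation estimate on the $(n-1)$-sphere with modulus $A_{n-1}$. The paper invokes (ii) directly in its $\tau$-dependent form from \cite[Thm.\ 4.1]{Carozza16}, which is exactly the scaled inequality you recover by pulling back to the unit sphere, so the two arguments coincide.
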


\begin{proof} The proof simplifies  the one presented in \cite[Thm.\ 3.1]{Carozza19}. 

Let $\vec a$ and $\vec b$ be Lebesgue points of $f$ in $B_r(\vec x_0)$.
Since
\begin{align}
 |f(\vec a)-f(\vec b)| = \left | \lim_{\rho \to 0} \dashint_{B(\vec 0, \rho)}
 (f(\vec a +\vec z ) - f(\vec b + \vec z)) \dd\vec z \right |
 \leq \limsup_{\rho\to 0} \dashint_{B(\vec 0, \rho)} |f(\vec a +\vec z ) - f(\vec b + \vec z)|\dd\vec z; 
\end{align}
for almost every $\tau\in (r,R)$
\begin{align}
  \essosc_{B(\vec x_0, \tau)} f \leq K\essosc_{S(\vec x_0, \tau)} f;
\end{align}
and for every $\rho < \min\{r-|\vec a -\vec x_0|, r-|\vec b - \vec x_0|\}$
and a.e.\ $\tau\in (r,R)$
\begin{align}
  |f(\vec a +\vec z ) - f(\vec b + \vec z)|\leq \essosc_{B(\vec x_0, \tau)} f \quad \text{for a.e.\ }z\in B(\vec 0, \rho);
\end{align}
it follows that 
\begin{equation}\label{pseudomon}
|f(\vec a)-f(\vec b)|\leq K \essosc_{S_\tau(\vec x_0)} f\quad \text{for a.e.\ }\tau\in (r,R).
\end{equation}
At this point, for a.e.\ $\tau>0$ the Poincar\'e-Sobolev inequality, \cite[Thm.\ 4.1]{Carozza16}, 
on the $(n-1)$-dimensional sphere for functions in $W^{1,A}(B_\tau)$ holds:
\begin{equation}
\essosc_{S_\tau} f\le C \tau A_{n-1}^{-1}\Big( \tau^{1-n} \int_{S_\tau } A(|\nabla f|) d \mathcal{H}^{n-1}\Big).
\end{equation}
The proof is finished by combining  \eqref{pseudomon} with the Poincar\'e-Sobolev
inequality.
\end{proof}

One part of the proof of \cite[Thm. 3.1]{Carozza19}
consists in obtaining the estimate \eqref{eq:osc2} below and the a.e.\ differentiability of Orlicz maps 
from
the Gehring oscillation estimate \eqref{eq:osc} (stated in \cite{Carozza19} as Eq.\ (4.15)).
In order to make this connection more explicit we state it as a separate proposition.

\begin{proposition}
If $f\in W^{1,A}_{loc}(\Omega)$ and satisfies \eqref{eq:osc} then
$f\in L^{\infty}_{loc}(\Omega)$ and there exists a constant $c=c(n)$ such that
\begin{equation} \label{eq:osc2}
\essosc _{B_r} f\le c K r A_{n-1}^{-1}\Big(\dashint_{B_{2r}} A(|\nabla f|) d x \Big)
\end{equation}
whenever $B_{2r}\ssubset \Omega$. Moreover, there exists a representative of $f$ that is differentiable a.e.
\end{proposition}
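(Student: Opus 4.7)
The strategy has two parts: first derive the oscillation estimate \eqref{eq:osc2} from \eqref{eq:osc} by a shell-integration argument (which simultaneously gives $f\in L^\infty_{\loc}$); then conclude a.e.\ differentiability of the precise representative via Stepanov's differentiability theorem.

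For \eqref{eq:osc2}, I would fix a ball $B_{2r}(\vec x_0)\ssubset\Omega$ and two Lebesgue points $\vec a,\vec b$ of $f$ in $B_r(\vec x_0)$. Dividing \eqref{eq:osc} by $t^{n-1}$ and using that $A_{n-1}$ is non-decreasing together with $t\le 2r$, I get for a.e.\ $t\in(r,2r)$
\[
A_{n-1}\!\Big(\tfrac{|f(\vec a)-f(\vec b)|}{2CKr}\Big)\le A_{n-1}\!\Big(\tfrac{|f(\vec a)-f(\vec b)|}{CKt}\Big)\le t^{1-n}\!\int_{S_t(\vec x_0)}\! A(|\nabla f|)\,\dd\mathcal H^{n-1}.
\]
Integrating over $t\in(r,2r)$ and invoking the coarea formula converts the right-hand side into a volume average:
\[
A_{n-1}\!\Big(\tfrac{|f(\vec a)-f(\vec b)|}{2CKr}\Big)\le c_n\dashint_{B_{2r}(\vec x_0)}\! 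A(|\nabla f|)\,\dd\vec x.
\]
Applying $A_{n-1}^{-1}$ and absorbing $c_n$ via the $\Delta_2$-type bound $A_{n-1}^{-1}(c_n s)\le c A_{n-1}^{-1}(s)$ — which $A_{n-1}$ inherits from the $\Delta_2$ hypothesis on $A$ through \eqref{an-1} — then taking the essential supremum over $\vec a,\vec b$ yields \eqref{eq:osc2}. In particular $f\in L^\infty_{\loc}$ and the precise representative $\tilde f(\vec x):=\lim_{r\searrow 0}\dashint_{B_r(\vec x)}f$ is everywhere defined and continuous on $\Omega$.

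For differentiability, I would pick any $\vec x_0\in\Omega$ which is a Lebesgue point of $A(|\nabla f|)$ (the complement has $\mathcal L^n$-measure zero since $A(|\nabla f|)\in L^1_{\loc}$) and pass to the limsup in \eqref{eq:osc2}:
\[
\limsup_{r\searrow 0}\frac{\essosc_{B_r(\vec x_0)}\tilde f}{r}\le cK\,A_{n-1}^{-1}\!\big(A(|\nabla f(\vec x_0)|)\big)<\infty.
\]
Since $\tilde f$ is continuous, this is equivalent to $\tilde f$ having a finite pointwise Lipschitz constant at $\vec x_0$, a property that thus holds at $\mathcal L^n$-a.e.\ point of $\Omega$. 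Stepanov's differentiability theorem then guarantees that $\tilde f$ is classically differentiable at $\mathcal L^n$-a.e.\ $\vec x_0\in\Omega$, and the classical derivative agrees with the weak derivative $\nabla f(\vec x_0)$ wherever both exist.

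The main obstacle is the constant/dimensional bookkeeping needed to verify that $A_{n-1}$ inherits $\Delta_2$ from $A$, so that the dimensional factor $c_n$ can be absorbed into $c$ upon inverting $A_{n-1}$; this is a standard but somewhat delicate computation based on the formula \eqref{an-1} and properties of Young conjugates. Once this point is settled, the shell integration in Step 1 is routine and the differentiability in Step 2 reduces to a direct application of Stepanov's theorem, so I do not expect further substantial difficulties beyond those already addressed in \cite{Carozza19}.
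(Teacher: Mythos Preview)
Your approach is correct and is precisely the one the paper attributes to \cite[Thm.\ 3.1]{Carozza19}: integrate the Gehring-type sphere estimate \eqref{eq:osc} over $t\in(r,2r)$ to obtain \eqref{eq:osc2}, then invoke Stepanov's theorem.

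Two minor points. First, what you flag as ``the main obstacle'' is not one: to pull $c_n$ out of $A_{n-1}^{-1}$ you do not need $\Delta_2$ at all, since $A_{n-1}$ is a Young function and hence $A_{n-1}^{-1}$ is concave with $A_{n-1}^{-1}(0)=0$, giving $A_{n-1}^{-1}(c_n s)\le c_n A_{n-1}^{-1}(s)$ for $c_n\ge 1$ directly. Second, the claim that the precise representative $\tilde f$ is continuous \emph{everywhere} is stronger than what \eqref{eq:osc2} yields and stronger than what you need; the essential oscillation bound already gives $|\tilde f(\vec y)-\tilde f(\vec x_0)|\le \essosc_{B_{2|\vec y-\vec x_0|}(\vec x_0)} f$ at Lebesgue points (since averages lie between $\essinf$ and $\esssup$), and this suffices for the Stepanov hypothesis at a.e.\ $\vec x_0$.
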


\begin{remark}
As explained in \cite[Rmk.\ 3.2]{Carozza19}, 
another way of seeing 
that weakly monotone maps with $\int A(|\nabla f|)\dd\vec x<\infty$ for some $A$
satisfying \eqref{eq:growth2}
are a.e.\ differentiable is by recalling that maps with this integrability 
have gradients in the Lorentz space $L^{n-1,1}$ (thanks to \cite{Kauhanen99}, see Proposition \ref{pr:embedding} above)
and that weakly monotone maps with $\nabla f \in L^{n-1,1}$ 
were proved to be a.e.\ differentiable in \cite[Thm.\ 1.2]{Onninen00}.
\end{remark}

\begin{proposition}
  \label{pr:H1continuity}
 Let $A$ be an $N$-function satisfying \eqref{eq:L2logL}
and the $\Delta_2$-condition at infinity.
For every K-pseudomonotone map $u$ in $W^{1,A}(\Omega)$
\begin{align}
 \mathcal H^{h(\cdot)}(\{\vec x_0\in \Omega: \limsup_{r\searrow 0}
  \essosc_{B(\vec x_0, r)} u >0\})=0.
\end{align}
\end{proposition}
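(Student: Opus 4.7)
The plan is to combine the Gehring-type oscillation estimate \eqref{eq:osc2}, which applies to $u$ by the preceding proposition (since $u$ is weakly $K$-pseudomonotone and $\int_\Omega A(|\nabla u|)\,\dd\vec x < \infty$), with Lemma \ref{le:Hh} applied to the $L^1(\Omega)$ density $f := A(|\nabla u|)$.

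Suppose $\vec x_0 \in \Omega$ satisfies $\limsup_{r\searrow 0}\essosc_{B(\vec x_0,r)} u > 0$, and pick $\eta > 0$ together with $r_k \searrow 0$ such that $B(\vec x_0, 2r_k) \ssubset \Omega$ and $\essosc_{B(\vec x_0,r_k)} u \ge \eta$. Inverting the monotone function $A_{n-1}$ in \eqref{eq:osc2} yields
\[
\int_{B(\vec x_0, 2r_k)} A(|\nabla u|)\,\dd\vec x \;\ge\; c_n\, (2r_k)^n\, A_{n-1}\!\left(\frac{\eta}{cK r_k}\right).
\]
I then compare this lower bound with $h(2r_k)$. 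Monotonicity of $t\mapsto t^{n-1} A_{n-1}(1/t)$ (recalled in \cite[Eq.~(4.16)]{Carozza19}), combined with a dyadic decomposition of $(0, 2r_k)$ and the $\Delta_2$-condition on $A_{n-1}$, gives the two-sided comparison $h(r) \asymp r^n A_{n-1}(1/r)$ for small $r>0$. A further application of $\Delta_2$ (trivial if $\eta \ge cK/2$; iterated $\lceil \log_2(cK/\eta)\rceil$ times otherwise) produces $A_{n-1}(\eta/(cK r_k)) \ge c_0\, A_{n-1}(1/(2r_k))$ with $c_0 = c_0(\eta,K,A)>0$. Combining,
\[
\frac{\int_{B(\vec x_0, 2r_k)} A(|\nabla u|)\,\dd\vec x}{h(2r_k)} \;\ge\; c_1(\eta,K,A,n) > 0,
\]
so $\vec x_0$ belongs to the exceptional set $E := \bigl\{\vec x \in \Omega : \limsup_{r\searrow 0} h(r)^{-1}\int_{B(\vec x,r)} A(|\nabla u|)\,\dd\vec y > 0\bigr\}$, to which Lemma \ref{le:Hh} applies and gives $\mc{H}^{h(\cdot)}(E) = 0$, finishing the proof.

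The main obstacle is the upper half of the comparison $h(r) \asymp r^n A_{n-1}(1/r)$; while the lower bound $h(r)\ge \tfrac12 r^n A_{n-1}(1/r)$ is immediate from the monotonicity of $t^{n-1} A_{n-1}(1/t)$, the matching upper bound rests on the convergence of $\sum_{k\ge 0}(C_{\Delta_2}/2^n)^k$, where $C_{\Delta_2}$ is the $\Delta_2$-constant of $A_{n-1}$. This convergence is compatible with --- and in fact implicit in --- the integrability condition \eqref{eq:L2logL} that already ensures the finiteness of $h(r)$ near $r=0$, but has to be checked carefully for the chain of inequalities above to close.
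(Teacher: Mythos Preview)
Your strategy---reduce to Lemma~\ref{le:Hh} with $f=A(|\nabla u|)$---is exactly what the paper does, and the use of $\Delta_2$ for $A_{n-1}$ to absorb the factor $\eta/(cK)$ is also the same. The difference is in how the lower bound $\int_{B(\vec x_0,r)} A(|\nabla u|)\,\dd\vec x \gtrsim h(r)$ is obtained.

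You start from the integrated ball estimate \eqref{eq:osc2}, which yields a lower bound of order $r^n A_{n-1}(1/r)$, and then need the upper comparison $h(r)\lesssim r^n A_{n-1}(1/r)$. This is the obstacle you flag, and it is a real one: it does \emph{not} follow from $\Delta_2$ and the finiteness of $h$ alone. Abstractly, a Young function behaving like $t^n/\log^2 t$ at infinity has $\Delta_2$-constant tending to $2^n$, gives $h(r)\sim 1/\log(1/r)$, but $r^n A_{n-1}(1/r)\sim 1/\log^2(1/r)$, so the ratio blows up. Whether the specific $A_{n-1}$ of \eqref{an-1} can exhibit this under \eqref{eq:L2logL} is a separate question, but your geometric-series argument does not close without further input.

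The paper sidesteps this entirely by returning to the \emph{sphere} estimate \eqref{eq:osc}. Since $r\mapsto\essosc_{B(\vec x_0,r)} u$ is non-decreasing, a point in the bad set furnishes $\lambda>0$ with $\essosc_{B(\vec x_0,r)} u\ge\lambda$ for every $r>0$; plugging this into \eqref{eq:osc} gives, for a.e.\ $t$,
\[
\int_{S(\vec x_0,t)} A(|\nabla u|)\,\dd\mathcal H^{n-1}\ \ge\ t^{n-1}A_{n-1}\!\Bigl(\frac{\lambda}{CKt}\Bigr)\ \gtrsim\ t^{n-1}A_{n-1}\!\Bigl(\frac{1}{t}\Bigr),
\]
the last step by $\Delta_2$ exactly as you describe. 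Integrating in $t$ over $(0,r)$, the left side becomes $\int_{B(\vec x_0,r)} A(|\nabla u|)\,\dd\vec x$ by the coarea formula, and the right side is $h(r)$ \emph{by its very definition}. No comparison between $h(r)$ and $r^n A_{n-1}(1/r)$ is ever needed.
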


\begin{proof} 
 Using \eqref{eq:osc} as in the proof of \cite[Thm.\ 3.3]{Carozza19}
 it can be seen that 
 given any $\vec x_0\in \Omega$, 
 and any $r>0$ such that $B(\vec x_0, r)\ssubset \Omega$
 \begin{align}
  \label{eq:oscillation}
  t^{n-1} A_{n-1} \left ( \frac{\essosc_{B(\vec x_0,r)} u}{CKt}\right ) 
  \leq \int_{S(\vec x_0, t)} A(|D u|)\dd\mathcal H^{n-1}
 \end{align}
 for a.e.\ $t\in (r, \dist(\vec x_0, \partial \Omega))$.
 Using \eqref{eq:oscillation} instead of the classical 
 oscillation estimate for weakly monotone Sobolev maps,
 we proceed as in \cite[Thm.\ 7.4]{MuSp95}.
 Set 
 \begin{align}
  E:=\{\vec x_0\in \Omega: \limsup_{r\searrow 0}
  \essosc_{B(\vec x_0, r)} u >0\}
 \end{align}
 and let $\vec x_0\in E$.
 Then there exists $\lambda>0$ such that for a.e.\ $t <\dist(\vec x_0, \partial \Omega)$
\begin{align}
\int_{S(\vec x_0, t)} A(|D u|)\dd x \ge t^{n-1} A_{n-1} \left ( \frac{ \lambda} {CKt}\right ).
\end{align}
By \cite[Prop.\ 4.3]{Carozza19}, $A_{n-1}$ satisfies the $\Delta_2$ condition at infinity.
Hence, $$A_{n-1}\left ( \frac{ \lambda} {CKt}\right ) \geq (C')^{-(1+\log_2(CK/\lambda))} 
 A_{n-1} \left ( \frac{ 1} {t}\right )\quad \forall t < \frac{\lambda}{CKt_0},$$
 for some fixed positive $t_0$ and $C'$. 
Integrating over the interval $[0, r]$:
\begin{align}
\limsup_{r\searrow 0}\frac{\int_{B(\vec x_0, r)} A(|D u|)}{h(r)}\dd x\ge (C')^{-(1+\log_2(CK/\lambda))},
\end{align}
with $h$ defined as in \eqref{eq:h}.
The result then follows by applying Lemma \ref{le:Hh} to $f(x):=A(|D u(x)|)$.
\end{proof}

\begin{remark}
It follows from \eqref{eq:H1Hh} that
\begin{align}  
 \label{eq:H1Hh2}
 \mathcal H^1(E) \leq \frac{2}{A_{n-1}(1)} \mathcal H^{h(\cdot)}(E)
\end{align}
for every Borel set $E\subset \Rn$.
This will allow us to define, in Section \ref{se:classA}, a precise representative
of $u$ that is continuous outside an $\mc{H}^1$-null set.
This improves the result that 
$u$ is $\mathcal H^{h(\cdot)}$-continuous 
with $h(s)= s\log^{-\gamma}(\frac1s)$, for all $\gamma>n-2-\alpha$, in \cite[Example \ 5.1(iii)]{Carozza19}.
More generally, 
neither Proposition \ref{pr:H1continuity}
nor the $\mc{H}^1$-continuity are a consequence 
of \cite[Thm.\ 3.6]{Carozza19}.
Indeed, in order to obtain the $\mathcal H^1$-continuity from \cite[Thm.\ 3.6]{Carozza19}
we would need that
\begin{align} 
 \label{eq:int36}
\int_0 h(s) d\left( \frac{-1}{s^n \sigma(\frac1s) A_{n-1}(\frac 1s)}\right) \dd s<\infty
\end{align}
for $h(s)=s$
and some continuous function $\sigma:[0,\infty)\to[0,\infty)$ 
such that $\int^\infty \frac{\dd t}{t \sigma(t)}=\infty$,
but it can be shown that for any such $\sigma$ the integral 
in \eqref{eq:int36} is not convergent near $0$.
\end{remark}

\section{Orientation-preserving functions creating no new surface}
 \label{se:classA}

Our analysis is set up in the following functional class, 
for a given $N$-function $A$ satisfying \eqref{eq:L2logL}
and the $\Delta_2$-condition at infinity.

\begin{definition} 
  \label{df:classA}
 We define $\mathcal A$ as the set of 
 $\vec u \in W^{1,A}(\Omega, \R^n)$ such that $\det D\vec u \in L^1_{\loc}(\Omega)$,
 $\det D\vec u>0$ a.e.\ and $\mathcal E(\vec u)=0$.
\end{definition}

Intuitively, the maps that satisfy $\det D\vec u>0$ a.e.\ and $\mathcal E(\vec u)=0$ are those 
for which $\partial \vec u(\Omega)=\vec u(\partial \Omega)$ (recall the interpretation of $\mathcal E(\vec u)$
as the area of the surface created by $\vec u$, mentioned after Definition \ref{def:SE}).
It can be seen, using the density of the linear combinations of functions of separated variables,
that $\mathcal E(\vec u)=0$ if and only if 
$$\Div \big ( (\adj D\vec u) \vec g \circ \vec u \big ) =((\div g)\circ \vec u )\det D\vec u\quad
 \forall \vec g \in C_c^\infty (\Rn, \Rn).$$
 This is a regularity requirement. The identity is satisfied by $C^2$ maps $\vec u$, thanks to 
 Piola's identity. It is closely related to the well-known equation $\Det D\vec u=\det D\vec u$,
 satisfied by all $W^{1,n}$ maps. In fact, for maps in $W^{1,p}$ with $p>n-1$
 it was proved in \cite[Corollary 4.7]{BHM17} that $\det D\vec u>0$ a.e.\ and $\mathcal E(\vec u)=0$
 if and only if $\det D\vec u(\vec x)\ne 0$ for a.e.\ $\vec x\in \Omega$, 
 $\Det D\vec u=\det D\vec u$, and $\deg (\vec u, B, \cdot)\geq 0$
 for every ball $B$ belonging to $\mathcal U_{\vec u}$. 
 The condition $\deg (\vec u, B, \cdot)\geq 0$ for all $B$ is known in topology to be the 
 right way to express that $\vec u$ preserves orientation. Along these lines 
 it was proved in \cite[Thm.\ 7.2]{HeMo12} that without the regularity requirement 
 that $\mathcal E(\vec u)=0$ the condition $\det D\vec u>0$ a.e.\ is insufficient to ensure
 the preservation of orientation and the positivity of the Brouwer degree, even 
 if $\Det D\vec u=\det D\vec u$.

\subsection{Fine properties}

Recall the notation $\mathcal N$ from Section \ref{subse:approximate}.

\begin{proposition}
 \label{th:char1}
Every $\vec u \in \mathcal A$ satisfies:
  \begin{enumerate}[a)]
   \item $\vec u \in L^\infty_{\loc}(\Omega, \Rn)$.
   \item $\Det D \vec u = \det D \vec u$.
   \item For all $U \in \mc{U}_{\vec u}$,
 \begin{equation}\label{eq:degUNU}
 \deg (\vec u, U, \cdot) = \mc{N}_U \ \text{ a.e.}
 \quad \text{and}\quad \imT(\vec u, U)=\imG(\vec u, U)\ \text{a.e.}
  \end{equation}
   \item For every $U_1$, $U_2\in \mc{U}_{\vec u}$ with $U_1\ssubset U_2$,
   \begin{equation}
    \label{pr:43d}
 \deg (\vec u, U_1, \cdot) \leq \deg (\vec u, U_2, \cdot) \text{ a.e.\ and in } \Rn \setminus \vec u (\p U_1 \cup \p U_2) ,
\quad\text{and}\quad \overline{\imT(\vec u, U_1)} \subset \overline{\imT(\vec u, U_2)}.
\end{equation}
   \item The components of $\vec u$ are weakly 1-pseudomonotone.
  \end{enumerate}
\end{proposition}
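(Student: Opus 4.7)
The plan is to derive everything from one Piola-type identity obtained by testing the vanishing surface energy against separated fields. For $\vec f(\vec x,\vec y)=\psi(\vec x)\vec g(\vec y)$ with $\psi\in C^\infty_c(\Omega)$ and $\vec g\in C^1_c(\Rn,\Rn)$, the identity $\mc{E}(\vec u,\vec f)=0$ reads
\[
 \int_\Omega \vec g(\vec u(\vec x))\cdot (\cof \nabla \vec u(\vec x))\,D\psi(\vec x)\,\dd\vec x + \int_\Omega \psi(\vec x)\,\div \vec g(\vec u(\vec x))\,\det\nabla\vec u(\vec x)\,\dd\vec x =0.
\]
For $U\in\mc{U}_{\vec u}$ I approximate $\chi_U$ by smooth cut-offs $\psi_\e$ depending only on the signed distance to $\p U$ introduced after \eqref{eq:Ut}; parts c) and d) of Definition \ref{df:good_open_sets}, combined with Fubini, collapse the first integral to $-\int_{\p U}\vec g(\vec u)\cdot(\cof\nabla\vec u\,\vecg\nu)\,\dd\mathcal{H}^{n-1}$, while the second tends to $\int_U \div\vec g(\vec u)\det\nabla\vec u\,\dd\vec x$. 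Rewriting the boundary integral via Proposition \ref{prop:degdiv} and the bulk integral via Proposition \ref{pr:areaformula} (using $\det\nabla\vec u>0$ a.e.) yields
\[
 \int_{\Rn}\div\vec g(\vec y)\bigl[\deg(\vec u,U,\vec y)-\mc{N}_U(\vec y)\bigr]\dd\vec y=0 \quad \forall\vec g\in C^1_c(\Rn,\Rn),
\]
so $\deg(\vec u,U,\cdot)-\mc{N}_U$ is constant and vanishes at infinity, giving the first identity of (c). The second identity, $\imT(\vec u,U)=\imG(\vec u,U)$ a.e., is cardinality bookkeeping: $\{\mc{N}_U\geq 1\}$ coincides with $\vec u(\Omega_0\cap U)$ up to an $\mc{L}^n$-null set because $\vec u(U\cap\{\det\nabla\vec u=0\})$ is $\mc{L}^n$-null by the area formula.

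Part (e) then follows by a standard degree argument. For a.e.\ $r\in(0,\dist(\vec x_0,\p\Omega))$ the ball $B=B(\vec x_0,r)$ lies in $\mc{U}_{\vec u}$ and $\vec u|_{\p B}$ is continuous (Convention \ref{co:convention}); setting $M:=\sup_{\p B} u_i$, the open half-space $\{y_i>M\}$ does not meet $\vec u(\p B)$ and lies in the unbounded component of $\Rn\setminus\vec u(\p B)$, so $\deg(\vec u,B,\cdot)\equiv 0$ there and hence $\mc{N}_B=0$ $\mc{L}^n$-a.e.\ there. Applying the area formula to $E=B\cap\{u_i>M\}$ gives $\int_E \det\nabla\vec u\,\dd\vec x=0$ and, by positivity of the Jacobian, $\mc{L}^n(E)=0$; the symmetric lower bound yields $\essosc_B u_i\leq \essosc_{\p B} u_i$ for a.e.\ $r$, i.e.\ componentwise weak $1$-pseudomonotonicity. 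Part (a) is immediate from Proposition \ref{pr:H1continuity} and the oscillation estimate \eqref{eq:osc2}. For (b), substitute $\vec f_k(\vec x,\vec y):=\phi(\vec x)\vec y\,\eta_k(\vec y)$ into $\mc{E}(\vec u,\vec f_k)=0$, where $\eta_k\in C^\infty_c(\Rn)$ equals $1$ on $B(\vec 0,k)$; since (a) gives $\vec u\in L^\infty_{\loc}$, for $k$ large $\eta_k(\vec u)\equiv 1$ and $D\eta_k(\vec u)\equiv 0$ on $\spt\phi$, and the identity reduces to $\int (\adj\nabla\vec u)\vec u\cdot D\phi\,\dd\vec x+n\int \phi\det\nabla\vec u\,\dd\vec x=0$, that is, $\Det D\vec u=\det\nabla\vec u$ in $\mc{D}'(\Omega)$. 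Finally (d): $\mc{N}_{U_1}\leq \mc{N}_{U_2}$ pointwise and the identity of (c) give $\deg(\vec u,U_1,\cdot)\leq \deg(\vec u,U_2,\cdot)$ a.e., upgraded to pointwise on $\Rn\setminus\vec u(\p U_1\cup \p U_2)$ by local constancy of the degree there; the closure inclusion $\overline{\imT(\vec u,U_1)}\subset \overline{\imT(\vec u,U_2)}$ then follows from the openness of $\imT(\vec u,U_i)$, the equality $\imT=\imG$ a.e.\ just proved, and $\imG(\vec u,U_1)\subset \imG(\vec u,U_2)$.

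The main obstacle is the boundary reduction carried out in the first paragraph: one must pass to the limit as $t\searrow 0$ in surface integrals of $\cof\nabla\vec u$ against $\vec g(\vec u)$ on the level sets $\p U_t$. This is precisely what parts c) and d) of Definition \ref{df:good_open_sets} have been crafted to secure, and it is where the Orlicz framework is pushed to its borderline: the growth condition \eqref{eq:L2logL} is what guarantees, via Proposition \ref{pr:embedding}, that restrictions of $\vec u$ to $(n-1)$-dimensional slices have Lorentz $L^{n-1,1}$ gradients, so that Fubini together with the continuous trace theory can be applied to $\cof\nabla\vec u$ along almost every concentric sphere.
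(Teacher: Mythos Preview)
Your proof is correct and follows essentially the approach of \cite[Thm.\ 4.1, Prop.\ 4.3(d), Prop.\ 5.5]{BHM17}, which is exactly what the paper invokes; the Piola identity obtained from $\mathcal E(\vec u,\psi\vec g)=0$, the passage to the boundary via the averaged convergence in Definition~\ref{df:good_open_sets}\,c)--d), and the degree argument for pseudomonotonicity are all standard in that line of work. One small remark: your route to part~(a) goes through (e) and the oscillation estimate \eqref{eq:osc2}, which is fine logically, but the shorter argument (and the one implicit in the paper's grouping of (a)--(c) together) is to read $L^\infty_{\loc}$ directly off of (c): since $\imT(\vec u,U)$ is bounded and $\imG(\vec u,U)=\imT(\vec u,U)$ a.e., the map $\vec u$ is essentially bounded on every $U\in\mathcal U_{\vec u}$, hence on every compact subset of $\Omega$ by Proposition~\ref{pr:good_Ut}\,\ref{le:UNu}).
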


\begin{proof}
 The equalities in \eqref{eq:degUNU}, that $D\vec u\in L^\infty_{\loc}(\Omega,\Rn)$
 and that $\Det D\vec u=\det D\vec u$ can be proved exactly 
 as in \cite[Thm.\ 4.1]{BHM17}. The monotonicity of the degree
 follows with the same proof of \cite[Prop.\ 4.3.(d)]{BHM17},
 taking into account that $\deg (\vec u, U, \cdot)\geq 0$ in $\R^n \setminus \vec u(\p U)$
 by virtue of \eqref{eq:degUNU}.
 Finally, the weak 1-pseudomonotonicity can be established exactly as in \cite[Prop.\ 5.5]{BHM17}.
\end{proof}

\begin{remark}
 The statement in \cite[p.\ 773]{BHM17} that the conditions that $\Det D\vec u=\det D\vec u$
 and $\det D\vec u>0$ a.e.\ are enough to ensure that the components of $\vec u$ are
 weakly monotone is incorrect. The construction in \cite[Thm.\ 7.2]{HeMo12} constitutes a counterexample.
 We were not able to determine whether the stronger condition that $\mathcal E(\vec u)=0$
 renders the conclusion true. 
\end{remark}

It is well known (see, e.g., \cite[Ch.\ 2]{HeKo14})
that the weak monotonicity implies regularity properties. In particular, for $W^{1,p}$-maps
with $p>n-1$, a representative of $\vec u$ is continuous $\mc{H}^{n-p}$-a.e.\ 
(if $p \leq n$) and differentiable a.e\@. In our case, we get that $\vec u$ is continuous $\mc{H}^{h(\cdot)}$-a.e., where $h$ is defined in \eqref{eq:h}.
However, we will not deal with the representative normally used in the theory of monotone maps 
(see, e.g., \cite{Sverak88,Manfredi94,Vodopyanov00,HaMa02,HeKo14}) 
but rather with the one defined in \cite[Th.\ 7.4]{MuSp95}, which we explain in the following paragraphs.

\begin{definition}\label{de:imTux}
Let $\vec u \in \mc{A}$.
We define the \emph{topological image} of a point $\vec x \in \Omega$ by $\vec u$ as 
\begin{equation*}
 \imT(\vec u, \vec x) := \bigcap_{\substack{r>0 \\ B(\vec x, r) \in \mathcal U_{\vec u}}} \overline{\imT(\vec u, B(\vec x,r))},
\end{equation*}
and $NC:=\{\vec x\in\Omega \colon \mc{H}^0 (\imT(\vec u,\vec x)) >1\}$.
\end{definition}

As explained in \cite[Rmk.\ 5.7.(c)]{BHM17}, neither the topological image of a point nor the set $NC$
depend on the particular representative of $\vec u$ 
(if $\vec u_1, \vec u_2\in \mathcal A$ and $\vec u_1=\vec u_2$ a.e.\ then $\imT(\vec u_1, \vec x)
=\imT(\vec u_2, \vec x)$ for every $\vec x \in \Omega$ and the set $NC$ defined through $\vec u_1$
coincides with the one defined through $\vec u_2$).

\begin{proposition} 
 \label{pr:fine}
For every $\vec u \in \mathcal{A}$ the following are satisfied:
\begin{enumerate}[a)] 
 \item $\mc{H}^1(NC)=0$.
 \item For every $\vec x_0 \in \Omega \setminus NC$ the function 
 $\displaystyle r\mapsto \dashint_{B(\vec x_0, r)} \vec u(\vec x)\,\dd \vec x$ 
 converges, as $r\searrow 0$, to some $\vec u^*(\vec x_0)\in \Rn$.  
 \item The map $\hat{\vec u}$ defined \emph{everywhere} in $\Omega$ by
 \begin{equation}
 \hat {\vec u}(\vec x):=
\begin{cases} 
  \vec u^*(\vec x) & \text{if }
  \vec x \in \Omega \setminus NC, \\ 
\text{any element of $\imT(\vec u,\vec x)$} & \text{if } \vec x \in NC  
\end{cases}
\end{equation}
 is such that $\hat{\vec u}(\vec x)=\vec u(\vec x)$ for every $\vec x \in \Omega_0$ and
 $\hat {\vec u}(\vec x)\in \imT(\vec u, \vec x)$ for every $\vec x\in \Omega$.
 Moreover, it is continuous at every point of $\vec x \in \Omega\setminus NC$,
 differentiable a.e., and such that $\mc{L}^n(\hat {\vec u}(N))=0$
 for every $N\subset \Omega$ with $\mc{L}^n(N)=0$.
\end{enumerate}

\end{proposition}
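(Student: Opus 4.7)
My plan is to follow the blueprint of \cite[Th.\ 7.4]{MuSp95} and \cite[Thm.\ 5.8]{BHM17}, replacing the integrability assumption $p>n-1$ by the Orlicz condition \eqref{eq:L2logL} and appealing to the new $\mathcal H^1$-continuity result of Section \ref{se:H1continuity}. I would handle the three conclusions in order: first $\mathcal H^1(NC)=0$, then the convergence of the averages outside $NC$, and finally the four asserted properties of the precise representative $\hat{\vec u}$.

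For part (a) I would first observe that at any $\vec x_0$ where each scalar component $u_i$ of $\vec u$ satisfies $\limsup_{r\searrow 0}\essosc_{B(\vec x_0,r)} u_i = 0$ the topological image $\imT(\vec u,\vec x_0)$ is a singleton: for those $r$ with $B(\vec x_0, r)\in \mc{U}_{\vec u}$ the continuous representative of $\vec u|_{\partial B(\vec x_0, r)}$ supplied by Proposition \ref{pr:embedding} has supremum oscillation controlled by the essential one, so $\vec u(\partial B(\vec x_0,r))$ shrinks to a point, and since $\partial \imT(\vec u, B(\vec x_0,r))\subset \vec u(\partial B(\vec x_0,r))$ and $\imT(\vec u, B(\vec x_0,r))$ is bounded, its diameter tends to $0$. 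By Proposition \ref{th:char1}\,(e) every component is weakly $1$-pseudomonotone, so Proposition \ref{pr:H1continuity} produces an $\mathcal H^{h(\cdot)}$-null exceptional set for each $u_i$, and \eqref{eq:H1Hh2} upgrades this to $\mathcal H^1$-null; taking the union over $i=1,\dots,n$ gives the claim.

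For part (b) I fix $\vec x_0\in \Omega\setminus NC$ and call $\vec y_0$ the unique element of $\imT(\vec u,\vec x_0)$. The monotonicity in Proposition \ref{th:char1}\,(d) forces $\overline{\imT(\vec u, B(\vec x_0,r))} \subset B(\vec y_0,\varepsilon)$ for every sufficiently small $r$ with $B(\vec x_0,r)\in\mc{U}_{\vec u}$, while the area formula (Proposition \ref{pr:areaformula}) together with the a.e.\ identity $\imT=\imG$ of Proposition \ref{th:char1}\,(c) shows that $\vec u(\vec x)\in \overline{\imT(\vec u, B(\vec x_0,r))}$ for a.e.\ $\vec x\in B(\vec x_0,r)$; hence the averages lie within $\varepsilon$ of $\vec y_0$ and converge to $\vec u^*(\vec x_0):=\vec y_0$. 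Part (c) then follows from the same shrinking mechanism: on $\Omega_0\setminus NC$ approximate continuity forces $\vec u^*=\vec u$; on $\Omega_0\cap NC$ the area-formula argument places $\vec u(\vec x)$ in $\imT(\vec u,\vec x)$, so the free choice in the definition of $\hat{\vec u}$ can be taken as $\hat{\vec u}(\vec x):=\vec u(\vec x)$; continuity of $\hat{\vec u}$ on $\Omega\setminus NC$ is read off from the shrinking combined with Proposition \ref{th:char1}\,(d); a.e.\ differentiability comes either from \eqref{eq:osc2} or from combining $\nabla \vec u\in L^{n-1,1}_{\loc}$ (Proposition \ref{pr:embedding}) with \cite[Thm.\ 1.2]{Onninen00}; and Lusin's (N) condition splits as $\hat{\vec u}(N\cap\Omega_0)=\vec u(N\cap\Omega_0)$, controlled by the area formula with $\det\nabla\vec u>0$ a.e., plus $\hat{\vec u}(N\setminus\Omega_0)\subset \hat{\vec u}(\Omega\setminus\Omega_0)$, handled by covering with balls in $\mc{U}_{\vec u}$ together with the bound $\mathcal L^n(\imT(\vec u,B))\leq \int_B \det\nabla\vec u\,\dd\vec x$.

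I expect the main obstacle to be the pointwise identification $\hat{\vec u}\equiv \vec u$ on the whole of $\Omega_0$, including at points of $\Omega_0\cap NC$. Passing from the measure-theoretic equality $\imG=\imT$ a.e.\ in $\vec y$-space to the pointwise inclusion $\vec u(\vec x)\in\imT(\vec u,\vec x)$ in $\vec x$-space requires a careful area-formula argument showing that the set of $\vec x\in\Omega_0$ whose image lies in $\imG\setminus\imT$ is $\mathcal L^n$-negligible; and the verification of Lusin's (N) condition on $\Omega\setminus\Omega_0$, where $\hat{\vec u}$ is not defined by an a.e.\ formula, is where the $\mathcal H^1$-continuity from part (a) must be combined nontrivially with the volume estimates for topological images.
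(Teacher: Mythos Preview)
Your proposal is correct and covers all the required conclusions. The overall strategy matches the paper's (both ultimately rest on the inclusion $NC\subset E:=\bigcup_i\{\vec x_0:\limsup_{r\searrow 0}\essosc_{B(\vec x_0,r)}u_i>0\}$ together with Proposition~\ref{pr:H1continuity} and \eqref{eq:H1Hh2}, on the shrinking of $\overline{\imT(\vec u,B(\vec x_0,r))}$ to the singleton $\imT(\vec u,\vec x_0)$ for $\vec x_0\notin NC$, and on \eqref{eq:osc2} for Stepanov-type a.e.\ differentiability), but there are a few genuine differences in execution worth noting.

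First, you do part~(a) \emph{first}, arguing the contrapositive directly: small essential oscillation on $B(\vec x_0,r)$ forces small diameter of the trace image $\vec u(\partial B(\vec x_0,r))$ (via the fact that, for $B\in\mc U_{\vec u}$, $\mc H^{n-1}$-a.e.\ point of $\partial B$ lies in $\Omega_0$ and approximate continuity traps the trace values in $[\essinf_B u_i,\esssup_B u_i]$), hence small $\diam\imT(\vec u,B)$. The paper defers part~(a) to the \emph{end} and runs the opposite implication: from $\vec x_0\in NC$ it extracts $\vec x_1,\vec x_2\in\partial B\cap\Omega_0$ with $|\hat{\vec u}(\vec x_2)-\hat{\vec u}(\vec x_1)|>\lambda$ and then uses approximate continuity to produce density-$\tfrac12$ sets $A_1,A_2\subset B$ realising this gap, hence a lower bound on $\sum_i\essosc_B u_i$. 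These are dual arguments; yours is slightly more economical once the trace bound is made explicit. Second, for part~(b) you bypass the auxiliary capacity set $P$ that the paper introduces (the set where the $L^{n(n-1)}$ precise representative fails) and go directly from the nested compacta $\overline{\imT(\vec u,B(\vec x_0,r))}$ shrinking to $\{\vec y_0\}$ to convergence of the averages; the paper instead shows $\vec u^*(\vec x_0)\in\imT(\vec u,\vec x_0)$ outside $P\cup NC$, proves continuity outside $NC$, and then concludes $P\subset NC$. Your shortcut is valid and buys some brevity. Third, for a.e.\ differentiability the paper uses only the route through \eqref{eq:osc2} to obtain a Stepanov estimate; your alternative via \cite[Thm.\ 1.2]{Onninen00} is not quite on target, since that result is stated for weakly \emph{monotone} maps while here you only have weak $1$-pseudomonotonicity (and Proposition~\ref{pr:embedding} is formulated for $(n-1)$-manifolds, though the $L^{n-1,1}$ conclusion does extend to $\Omega$ by the same \cite{Kauhanen99} argument, as the Remark after \eqref{eq:osc2} notes). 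This is harmless since your first route works, but you should drop the second alternative or justify that Onninen's argument goes through for pseudomonotone maps.
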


\begin{proof}
 Let $\vec u \in \mathcal{A}$.
 Denote by $P$ the set of points $\vec x_0\in \Omega$ where the 
 following property fails: 
 there exists $\vec u^*(\vec x_0)\in \Rn$
 such that 
 $$\lim_{r\searrow 0} \dashint_{B(\vec x_0, r)}
 |\vec u(\vec x) - \vec u^*(\vec x_0)|^{n(n-1)}\,\dd\vec x =0.$$
 Since $\mathcal A\subset W^{1,n-1}(\Omega,\Rn)$,
 $P$ has zero $(n-1)$-capacity 
 (see \cite{Ziemer89,EvGa92} or, e.g., \cite[Prop.\ 2.8]{MuSp95}).
 Define
 \begin{equation*}
 \tilde {\vec u}(\vec x):=
\begin{cases} 
\vec u^*(\vec x) & \text{if } \vec x \in \Omega \setminus (P\cup NC), \\ 
\text{any element of $\imT(\vec u,\vec x)$} & \text{if } \vec x \in P\cup NC 
\end{cases}
\end{equation*}
 (use is being made of the axiom of choice).
 
 Let us prove that $\vec u^*(\vec x_0)\in \imT(\vec u, \vec x_0)$ 
 for every $\vec x_0 \in \Omega \setminus (P\cup NC)$.
 Suppose, for a contradiction,
 that $\vec u^*(\vec x_0)\in \Rn \setminus \overline{\imT(\vec u^*, B(\vec x_0, r))}$
 for some $r>0$ such that $B(\vec x_0,r)\in \mathcal U_{\vec u^*}$.
 Since $\deg(\vec u^*, B(\vec x_0, r), \vec y)=0$
 for every $\vec y$ in the open set $\Rn \setminus \overline{\imT(\vec u^*, B(\vec x_0, r))}$,
 the set of points $\vec x \in \Omega$ for which $\deg (\vec u^*, B(\vec x_0, r), \vec u(\vec x))=0$
 would have density $1$ at $\vec x_0$. However, this is incompatible with \eqref{eq:degUNU}.
 
 Proceeding as in Part (b) of the proof of \cite[Thm.\ 7.4]{MuSp95},
 it can be seen that $\tilde{\vec u}$ is continuous 
 at every point of $\vec x \in \Omega\setminus NC$
 (using \eqref{pr:43d} instead of \cite[Lemma 7.3(i)]{MuSp95}).
 One of the consequences of this continuuity is that $P$ is contained in $NC$,
 and, hence, $\tilde{\vec u}(\vec x)=\hat{\vec u}(\vec x)$ for every $\vec x \in \Omega$. 
 
 That $\hat{\vec u}$ satisfies Lusin's property can be proved as in \cite[Th.\ 10.1]{MuSp95}
 (with a slightly shorter proof since $\Det D\vec u=\det D\vec u$).
 
 That $NC$ is an $\mc{H}^1$-null set will be proved at the end.
 At this point, let us show how to obtain the a.e.\ differentiability of $\hat{\vec u}$
 under the assumption that $\mathcal{L}^n(NC)=0$.
 Let $\vec x_1\in \Omega \setminus NC$ be a Lebesgue point for $A(|D\vec u|)$
 and let $\vec x_2\in \Omega\setminus NC$ satisfy $B(\vec x_1, 2(|\vec x_2-\vec x_1|+\rho))\subset \Omega$
 for some $\rho>0$. 
 Let $A_{n-1}$ be the Young function given by
 \begin{align}
  A_{n-1}(t):=
               \left ( t^{\frac{n-1}{n-2}} \int_t^\infty 
		\frac{\widetilde A(r)}{r^{1+\frac{n-1}{n-2}}}\dd r \right )^{\widetilde{ }}.
 \end{align}
 Using \eqref{eq:osc2}
 (with radius $|\vec x_2-\vec x_1|+\rho$) we find that 
 for every $r\in (0,\rho)$ and a.e.\ $\vec z\in B(\vec 0,1)$
 $$
  |\vec u(\vec x_2+r\vec z) - \vec u(\vec x_1+r\vec z)|
  \leq C(|\vec x_1-\vec x_2|+\rho) A_{n-1}^{-1} \left ( \dashint_{B(\vec x_1, 2(|\vec x_1-\vec x_2|+\rho))} 
  A(|D\vec u|)\dd\vec x \right ).
 $$
 Since $\hat{\vec u}$ is continuous outside $NC$,
 \begin{align*}
  |\hat{\vec u}(\vec x_2)-\hat{\vec u}(\vec x_1)|
  &= \left | \lim_{r\searrow 0} \left (\dashint_{B(\vec x_2, r)} \vec u \dd\vec x 
  - \dashint_{B(\vec x_1, r)} \vec u \dd\vec x \right ) 
  \right |
  \\
  &\leq \liminf_{r\searrow 0} \dashint_{B(\vec 0, 1)}
  |\vec u(\vec x_2+r\vec z) - \vec u(\vec x_1+r\vec z)|\dd\vec z 
  \\
  &\leq C(|\vec x_1-\vec x_2|+\rho) A_{n-1}^{-1} \left ( \dashint_{B(\vec x_1, 2(|\vec x_1-\vec x_2|+\rho))} 
  A(|D\vec u|)\dd\vec x \right ).
 \end{align*}
 Letting $\rho\searrow 0$ we find that 
 \begin{equation}\label{eq:Stepanov1}
 \limsup_{\substack{\vec x_2 \to \vec x_1 \\ \vec x_2 \notin NC}} 
 \frac{|\hat{\vec u} (\vec x_2) - \hat{\vec u} (\vec x_1)|}{|\vec x_2 - \vec x_1|} < \infty .
 \end{equation}
 From this point onwards the a.e.\ differentiability can be obtained exactly as in 
 the proof of \cite[Prop.\ 5.9]{BHM17}.

 We now show how to adapt Part (c) of the proof of \cite[Thm.\ 7.4]{MuSp95} in order to obtain
 that $\mc{H}^1(NC)=0$. Set 
 $$E:=\bigcup_{i=1}^n \{\vec x_0 \in \Omega: 
 \liminf_{r\searrow 0} \essosc_{B(\vec x_0, r)} u^i >0\},$$
 where
 $u^i$ denotes the $i$-th component of $\hat{\vec u}$.
 By \eqref{eq:H1Hh2} and Proposition \ref{pr:H1continuity},
 it suffices to show that $NC\subset E$. 
 With this aim observe that
 for every $\vec x_0$ in $NC$ there exists $\lambda>0$ such that
 $\diam \overline{\imT(\hat{\vec u}, B(\vec x_0, r))}>\lambda$
 whenever $B(\vec x_0, r)\in \mathcal U_{\hat{\vec u}}$,
 because $\imT(\vec u, \vec x)$ is contained in 
 $\overline{\imT(\hat{\vec u}, B(\vec x_0, r))}$. 
 By Definition \ref{df:good_open_sets} and Convention \ref{co:convention}, 
 the restriction $\hat{\vec u}|_{\partial B(\vec x_0, r)}$
 may be assumed to be continuous.
 Since $\overline{\imT(\hat{\vec u}, B(\vec x_0, r))}$ is a compact set whose boundary
 is contained in $\hat{\vec u}(\partial B(\vec x_0, r))$,
 there exist $\vec x_1$ and $\vec x_2$ on $\partial B(\vec x_0, r)$
 such that $|\hat{\vec u}(\vec x_2) -\hat{\vec u}(\vec x_1)|>\lambda$. 
 By Definition \ref{df:good_open_sets}, almost every point of $\partial B(\vec x_0, r)$
 belongs to $\Omega_0$. Since $\hat{\vec u}|_{\partial B(\vec x_0, r)}$
 is continuous, without loss of generality we may assume that $\vec x_1$ and $\vec x_2$
 belong to $\Omega_0$. 
 By Definitions \ref{de:O0} and \ref{de:wapproximate}, 
 points in $\Omega_0$
 are points of approximate continuity for $\hat{\vec u}$. 
 As a consequence, there exist measurable sets $A_1,A_2\subset B(\vec x_0,r)$
 of density $\frac{1}{2}$ with respect to $\vec x_1$ and $\vec x_2$, respectively,
 such that 
 $$\forall \vec x_1'\in A_1\ \forall \vec x_2'\in A_2:\ 
 |\hat{\vec u}(\vec x_2')-\hat{\vec u}(\vec x_1')|>\lambda.$$
 Consequently, $$\sum_{i=1}^n \essosc_{B(\vec x_0,r)} u^i > \lambda.$$
 Since this is true for every $r$ such that $B(\vec x_0, r)\in \mathcal U_{\hat{\vec u}}$,
 we conclude that $\vec x_0\in E$, completing the proof.
\end{proof}

\subsection{Openness and properness}

We begin by noting that  equality \eqref{eq:degUNU} implies an openness property for $\vec u$:
		for every $U\in \mc{U}_{\vec u}$,
		\begin{equation}\label{eq:imTUimGU}
		 \imT (\vec u, U) = \imG (\vec u, U) \quad \text{a.e.}
		\end{equation}
		
\begin{definition} \label{de:imTomega}
Let $\vec u\in \mathcal A$, where $\mathcal A$ is that of Definition \ref{df:classA}.
Define $$\mc{U}^N_{\vec u} := \left\{ U \in \mc{U}_{\vec u} \colon \p U \cap NC = \varnothing \right\}$$
and
\[
 \imT (\vec u, \O) := \bigcup_{U\in\mathcal U^N_{\vec u}} \imT (\vec u, U).
\]
\end{definition}

We will see in Section \ref{se:lower} that $\imT (\vec u, \O)$ plays the role of the \emph{deformed configuration}.
By the continuity of the degree, $\imT (\vec u, U)$ is open, and hence, so is $\imT (\vec u, \O)$.
Also, it does not depend on the particular representative of $\vec u$ (the proof of
\cite[Lemma 5.18.(b)]{BHM17} remains valid in our setting).

\begin{proposition} \label{pr:good_Ut}
Let $\vec u \in \mc{A}$.
\begin{enumerate}[a)]
 \item 
  \label{le:UNu}
 For every non-empty open set $U \ssubset \O$ with a $C^2$ boundary 
 there exists $\d>0$ such that $U_t \in \mc{U}^N_{\vec u}$ for a.e.\ $t \in (-\d, \d)$, where $U_t$ is defined as in \eqref{eq:Ut}.
Moreover, for each compact $K \subset \O$ there exists $U' \in \mc{U}^N_{\vec u}$ such that $K \subset U'$.
 \item
  \label{it:KUt}
 For each $U \in \mc{U}^N_{\vec u}$ and 
		each compact $K\subset   \imT (\vec u, U)$
 there exists $\d >0$ such that
\[
		 K\subset
 \bigcap_{\substack{t \in (0,\d) \\ U_t \in \mc{U}^N_{\vec u}}} \imT (\vec u, U_t) .
\]
\end{enumerate}
\end{proposition}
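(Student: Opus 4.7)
For part (a), I would verify the two defining requirements of $\mc{U}^N_{\vec u}$ on slices $\p U_t$ separately. The four conditions of Definition~\ref{df:good_open_sets} hold for a.e.\ $t \in (-\d,\d)$ by the standard Fubini and coarea argument of \cite[Lemma 2.9]{MuSp95}, using the compact embedding of $W^{1,A}$ into continuous functions on $(n-1)$-dimensional $C^1$ manifolds (Proposition~\ref{pr:embedding}) and the Lebesgue differentiation theorem applied to the measures $t\mapsto \int_{\p U_t}|\cof \nabla \vec u|\,\dd\Hn$ and $t\mapsto \int_{\p U_t}\vec g(\vec u)\cdot(\cof \nabla \vec u)\vecg\nu_t\,\dd\Hn$ (the latter for $\vec g$ in a countable dense family of admissible test fields). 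Separately, the property $\p U_t \cap NC = \vacio$ holds for a.e.\ $t$ because the signed distance function $d$ is $1$-Lipschitz and Proposition~\ref{pr:fine}\,(a) gives $\mathcal{H}^1(NC)=0$, so $\mathcal{L}^1(d(NC)) \leq \mathcal{H}^1(NC)=0$ and $\p U_t = \{d=t\}$ is disjoint from $NC$ for every $t \notin d(NC)$. The ``moreover'' statement then follows by choosing a bounded $C^2$ open $V$ with $K \ssubset V \ssubset \O$, applying the above to $V$, and picking $t_0 < 0$ of small enough absolute value so that $V_{t_0} \in \mc{U}^N_{\vec u}$; then $U' := V_{t_0}$ works.

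For part (b), I would argue by contradiction. Suppose there exist $t_k \downarrow 0$ with $U_{t_k} \in \mc{U}^N_{\vec u}$ and $\vec y_k \in K$ with $\vec y_k \notin \imT(\vec u, U_{t_k})$; passing to a subsequence, $\vec y_k \to \vec y^* \in K$. Since $\p U \cap NC = \vacio$, the representative $\hat{\vec u}$ is continuous on the compact set $\p U$ by Proposition~\ref{pr:fine}, so $\hat{\vec u}(\p U)$ is compact and $\rho := \dist(K,\hat{\vec u}(\p U))>0$. The crucial auxiliary claim is that $\hat{\vec u}(\p U_{t_k})$ is contained in the open $(\rho/4)$-neighborhood of $\hat{\vec u}(\p U)$ for all sufficiently large $k$. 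If not, after passing to a subsequence there would exist $\vec x_k \in \p U_{t_k}$ with $\dist(\hat{\vec u}(\vec x_k), \hat{\vec u}(\p U)) \geq \rho/4$; by the Hausdorff convergence $\p U_{t_k} \to \p U$ of the smooth level sets, a further subsequence converges to $\vec x^* \in \p U$; since $\p U \cap NC = \vacio$, $\hat{\vec u}$ is continuous at $\vec x^*$, and thus $\hat{\vec u}(\vec x_k) \to \hat{\vec u}(\vec x^*) \in \hat{\vec u}(\p U)$, a contradiction.

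Granted the claim, the ball $B := B(\vec y^*, \rho/2)$ is disjoint from $\hat{\vec u}(\p U_{t_k}) \cup \hat{\vec u}(\p U)$ for large $k$, so $\deg(\vec u, U_{t_k}, \cdot) \equiv k_k''$ and $\deg(\vec u, U, \cdot) \equiv k' \ne 0$ are constant integers on $B$. Integrating the a.e.\ identity $\deg(\vec u, U_{t_k}, \cdot) = \mc{N}_{U_{t_k}}$ (Proposition~\ref{th:char1}(c)) over $B$, applying the area formula (Proposition~\ref{pr:areaformula}), and using monotone convergence on $U_{t_k} \nearrow U$,
\begin{equation*}
k_k''\,\Ln(B) = \int_{U_{t_k}\cap \O_0} \chi_B(\vec u)\,|\!\det \nabla \vec u|\,\dd\vec x \;\longrightarrow\; \int_{U\cap \O_0} \chi_B(\vec u)\,|\!\det \nabla \vec u|\,\dd\vec x = k'\,\Ln(B),
\end{equation*}
so integrality forces $k_k'' = k'$ for large $k$. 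Since $\vec y_k \in B$ and $\vec y_k \notin \hat{\vec u}(\p U_{t_k})$ eventually, this gives $\vec y_k \in \imT(\vec u, U_{t_k})$, contradicting the hypothesis. The main obstacle is the uniform closeness claim for $\hat{\vec u}(\p U_{t_k})$---delicate because $\hat{\vec u}$ is merely continuous outside $NC$---overcome precisely because $U \in \mc{U}^N_{\vec u}$ forces $\p U$ into the continuity set of $\hat{\vec u}$ while the $C^2$ level sets $\p U_{t_k}$ collapse onto $\p U$ in Hausdorff distance.
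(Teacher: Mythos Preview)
Your proposal is correct and follows essentially the same route as the paper: for part~(a) you combine the Fubini/coarea argument behind $\mc U_{\vec u}$ with $\mathcal H^1(NC)=0$ and the Lipschitz property of the signed distance, exactly as the paper does; for part~(b) you use the continuity of $\hat{\vec u}$ at every point of $\partial U$ (guaranteed by $\partial U\cap NC=\varnothing$) to trap $\hat{\vec u}(\partial U_{t_k})$ near $\hat{\vec u}(\partial U)$, which is precisely the mechanism the paper invokes via \cite[Lemma 5.18(a)]{BHM17}. Your concrete device in part~(b)---passing the degree through the area formula and monotone convergence on $U_{t_k}\nearrow U$---is a nice explicit substitute for the degree-stability argument the paper cites, but it is not a genuinely different strategy.
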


\begin{proof}
 \emph{Part \ref{le:UNu}):} 
 Since, by Proposition \ref{pr:fine}, the set $NC$ is $\mc{H}^1$-null, for each $\vec x\in \Omega$
there exists an $\mathcal L^1$-null set $N\subset (0,\infty)$
such that $NC\cap \partial B(\vec x,r)=\vacio$
for all $r\in (0,\dist(\vec x,\partial \Omega))\setminus N$.
Combining this with \cite[Prop.\ 2.8]{MuSp95} and \cite[Lemma 2 and Def.\ 11]{HeMo11} (or \cite[Lemma 2.16]{HeMo12})
we obtain that there are enough sets in $\mc{U}^N_{\vec u}$ whose boundaries do not intersect $NC$, as claimed.
 \medskip
 
 \emph{Part \ref{it:KUt}):}
 By Convention \ref{co:convention} and Proposition \ref{pr:fine} we have that $\imT(\vec u, U)=\imT(\hat{\vec u}, U)$
 for every $U\in \mathcal U_{\vec u}^N$. Using this and the continuity of $\hat{\vec u}$ at every point of $\partial U$
 the result follows with the same proof of \cite[Lemma 5.18.(a)]{BHM17}.
\end{proof}

\subsection{Local invertibility}

\begin{definition} \label{de:Oinv}
Let $\vec u\in \mathcal A$.
We denote by $\mc{U}^{\inv}_{\vec u}$ the class of $U\in \mathcal U_{\vec u}$ such that $\vec u$ is one-to-one a.e.\ in $U$
(see Definition \ref{df:1-1ae}), 
and by $\mc{U}^{N,\inv}_{\vec u}$ the set $\mc{U}^N_{\vec u} \cap \mc{U}^{\inv}_{\vec u}$. 
Define $$\Omega_{\inv}:=\bigcup \mc{U}^{\inv}_{\vec u}.$$
\end{definition}

The set $\Omega_{\inv}$
consists of the sets of points around which $\vec u$ is locally a.e.\ invertible: 
$\vec x \in \O_{\inv}$ if and only if there exists $r>0$ such that $\vec u$ is one-to-one a.e.\ in $B (\vec x, r)$.
It does not depend on the particular representative of $\vec u$ (as explained after Def.\ 4.4 in \cite{BHM17}).

The local invertibility theorem of Fonseca \& Gangbo \cite{FoGa95paper} for $W^{1,p}$ maps with $p>n$
was generalized, under the assumption  $\mathcal E(\vec u)=0$, to all $p>n-1$. Here it is shown
to hold also in the Orlicz-Sobolev case under the growth condition \eqref{eq:L2logL}.

\begin{proposition}
 For every $\vec u\in \mathcal A$ the set $\O_{\inv}$ is of full measure in $\Omega$.
\end{proposition}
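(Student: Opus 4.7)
The plan is to prove the stronger assertion that $\Omega_0 \subset \Omega_{\inv}$ holds $\mc{L}^n$-a.e., which is enough since $\Omega_0$ is of full measure in $\Omega$ (see the comment after Definition \ref{de:O0}). I fix $\vec x_0 \in \Omega_0$ with the additional properties that (i) the precise representative $\hat{\vec u}$ constructed in Proposition \ref{pr:fine} is classically differentiable at $\vec x_0$ and (ii) $\vec x_0$ is a Lebesgue point of $\det \nabla \vec u$; both hold for $\mc{L}^n$-a.e.\ $\vec x_0 \in \Omega_0$. By Definition \ref{de:O0}, there exist $\vec w \in C^1(\R^n, \R^n)$ and a compact set $K \subset \Omega_0$ of density $1$ at $\vec x_0$ with $\vec u|_K = \vec w|_K$ and $\nabla \vec u|_K = D \vec w|_K$. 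Since $\det D \vec w(\vec x_0) = \det \nabla \vec u(\vec x_0) > 0$, the classical inverse function theorem yields $\rho > 0$ such that $\vec w$ restricts to a $C^1$ diffeomorphism of $\overline{B(\vec x_0, 2\rho)}$ onto its image.

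The core idea is a blow-up at $\vec x_0$. For each $r \in (0, \rho)$ I set
\[
\vec u_r(\vec z) := \frac{\hat{\vec u}(\vec x_0 + r \vec z) - \hat{\vec u}(\vec x_0)}{r}, \qquad \vec z \in \overline{B(\vec 0, 1)},
\]
and let $\vec L := \nabla \vec u(\vec x_0)$. The classical differentiability of $\hat{\vec u}$ at $\vec x_0$ gives $\vec u_r \to \vec L$ uniformly on $\overline{B(\vec 0, 1)}$ as $r \searrow 0$, and in particular on $\partial B(\vec 0, 1)$; this is the decisive advantage of working with the precise representative produced by the fine theory of Sections \ref{se:H1continuity} and \ref{se:classA}, rather than with an arbitrary a.e.\ representative. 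Using Proposition \ref{pr:good_Ut}\emph{\ref{le:UNu})} I pick a sequence $r_j \searrow 0$ with $B(\vec x_0, r_j) \in \mc{U}_{\vec u}^N$, so that $\hat{\vec u}|_{\partial B(\vec x_0, r_j)}$ is continuous (Convention \ref{co:convention} and Proposition \ref{pr:fine}\emph{c)}) and agrees with the continuous $W^{1,A}$-representative of $\vec u|_{\partial B(\vec x_0, r_j)}$ employed in the definition of the Brouwer degree.

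Since $\vec L$ is a linear bijection with positive determinant, $\deg(\vec L, B(\vec 0, 1), \cdot) = \chi_{\vec L(B(\vec 0, 1))} \in \{0, 1\}$ $\mc{L}^n$-a.e. By the standard stability of the Brouwer degree under uniform convergence of boundary data, $\deg(\vec u_{r_j}, B(\vec 0, 1), \cdot) \to \deg(\vec L, B(\vec 0, 1), \cdot)$ pointwise on $\R^n \setminus \vec L(\partial B(\vec 0, 1))$. Unwinding the rescaling, for $j$ large enough and $\mc{L}^n$-a.e.\ $\vec y$ one has $\deg(\vec u, B(\vec x_0, r_j), \vec y) \in \{0, 1\}$. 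Proposition \ref{th:char1}\emph{c)} then identifies this degree with $\mc{N}_{B(\vec x_0, r_j)}(\vec y)$, whence $\mc{N}_{B(\vec x_0, r_j)} \in \{0, 1\}$ $\mc{L}^n$-a.e. This means that $\vec u$ is one-to-one a.e.\ on $B(\vec x_0, r_j)$, so $B(\vec x_0, r_j) \in \mc{U}_{\vec u}^{N,\inv}$, and therefore $\vec x_0 \in \Omega_{\inv}$.

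The main technical obstacle is to upgrade the pointwise degree convergence into an $\mc{L}^n$-a.e.\ identity at a fixed scale $r_j$: a priori, extra bounded components of $\R^n \setminus \vec u_{r_j}(\partial B(\vec 0, 1))$ could appear inside the $o(1)$-tubular neighborhood of $\vec L(\partial B(\vec 0, 1))$ and carry degree $\ge 2$. Resolving this requires combining the non-negativity of $\deg(\vec u, \cdot, \cdot)$ (Proposition \ref{th:char1}) with the smallness of the boundary contribution $\int_{\partial B(\vec x_0, r_j) \setminus K} |\cof \nabla \vec u|\, d\mc{H}^{n-1}$, which tends to zero along a suitable subsequence by the absolute continuity of integrals built into Definition \ref{df:good_open_sets}\emph{c)} and the density-$1$ property of $K$ at $\vec x_0$; together with Proposition \ref{prop:degdiv} applied to the divergence identity for $\deg(\vec u, B(\vec x_0, r_j), \cdot) - \deg(\vec w, B(\vec x_0, r_j), \cdot)$, this forces the spurious extra components to carry degree $0$ along the sequence. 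All of these ingredients are available in the Orlicz-Sobolev setting precisely thanks to the integrability condition \eqref{eq:L2logL} and the $\mc{H}^1$-continuity and a.e.\ differentiability developed in Sections \ref{se:H1continuity} and \ref{se:classA}.
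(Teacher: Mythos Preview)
Your overall strategy---reducing to points where the precise representative $\hat{\vec u}$ is classically differentiable with positive Jacobian, and then comparing the degree of the rescaled map to that of its linearization---is the same as the paper's (which simply refers to \cite[Prop.~4.5(d)]{BHM17}). The gap you correctly identify, however, is not closed by the fix you sketch. Using Proposition~\ref{prop:degdiv} for $\vec u$ and for $\vec w$ and subtracting, the smallness of $\int_{\partial B(\vec x_0, r_j)\setminus K} |\cof\nabla\vec u|\,\dd\mc{H}^{n-1}$ only bounds the \emph{total variation} of $D := \deg(\vec u, B(\vec x_0,r_j), \cdot) - \deg(\vec w, B(\vec x_0,r_j), \cdot)$ as a $BV$ function. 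Since $D$ is integer-valued, this does not rule out $D = 1$ (hence $\deg(\vec u, B(\vec x_0,r_j), \cdot) = 2$) on a set of small but strictly positive measure inside the tubular neighbourhood of $\vec L(\partial B)$; an isoperimetric bound gives only $|\{D\neq 0\}|^{(n-1)/n}\lesssim |\nabla D|(\Rn)$, not $|\{D\neq 0\}|=0$. Consequently, for each fixed $r_j$ your argument does not yield $\mc{N}_{B(\vec x_0, r_j)} \leq 1$ a.e.

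The clean resolution uses the monotonicity in Proposition~\ref{th:char1}\,\emph{d)} with \emph{two} concentric balls rather than a single radius. First choose $r$ with $B(\vec x_0, r)\in\mc{U}_{\vec u}$ small enough that, by stability of the degree and $\det\vec L>0$, one has $\deg(\hat{\vec u}, B(\vec x_0, r), \cdot) = 1$ on a ball $B(\hat{\vec u}(\vec x_0), \sigma r/2)$, where $\sigma$ is the smallest singular value of $\vec L$. Then pick $r'\ll r$ with $B(\vec x_0, r')\in\mc{U}_{\vec u}$; classical differentiability of $\hat{\vec u}$ at $\vec x_0$ gives $\hat{\vec u}(\partial B(\vec x_0, r'))\subset B(\hat{\vec u}(\vec x_0), \sigma r/2)$, so every bounded connected component of $\Rn\setminus\hat{\vec u}(\partial B(\vec x_0, r'))$ lies inside the degree-$1$ region of the larger ball. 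Monotonicity then forces $\deg(\hat{\vec u}, B(\vec x_0, r'), \cdot) \leq 1$ on all of $\Rn\setminus\hat{\vec u}(\partial B(\vec x_0, r'))$, and Proposition~\ref{th:char1}\,\emph{c)} yields $\mc{N}_{B(\vec x_0, r')} \leq 1$ a.e., i.e.\ $B(\vec x_0, r')\in\mc{U}_{\vec u}^{\inv}$. Note that neither $\vec w$ nor $K$ is needed once classical differentiability of $\hat{\vec u}$ is available.
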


\begin{proof}
 It can be proved that every $\vec x_0\in \Omega$ where $\hat{\vec u}$ is differentiable and $\det D\hat{\vec u}(\vec x_0)>0$
 belongs to $\O_{\inv}$, with the same arguments as in \cite[Proposition 4.5.(d)]{BHM17}.
\end{proof}

Equality \eqref{eq:imTUimGU} makes it possible to define the local inverse having for domain an open set.
\begin{definition}\label{de:inverse}
Let $\vec u \in \mc{A}$ and $U \in \mc{U}^{\inv}_{\vec u}$.
The inverse $(\vec u|_U)^{-1} : \imT (\vec u, U) \to \Rn$ is defined a.e.\ as $(\vec u|_U)^{-1} (\vec y) = \vec x$, 
for each $\vec y \in \imG (\vec u, U)$, and where $\vec x \in U \cap \O_0$ satisfies $\vec u (\vec x) = \vec y$.
\end{definition}

A careful inspection of the proofs shows that 
\cite[Th.\ 3.3]{HeMo15} remains valid in the class $\mathcal A$
or Orlicz-Sobolev maps with positive Jacobian, zero surface energy
and an integrability above $W^{1,n-1}$. (Use is made in
\cite{HeMo15} of the stronger invertibility condition INV of M\"uller \& Spector;
this condition holds for every $U \in \mc{U}^{\inv}_{\vec u}$
thanks to \eqref{eq:imTUimGU}.)

\begin{proposition}\label{prop:locINV}
Let $\vec u \in \mc{A}$ and $U \in \mc{U}^{\inv}_{\vec u}$. Then 
\[
(\vec u|_U)^{-1} \in W^{1,1} (\imT (\vec u, U), \Rn) \quad \text{and} 
\quad D (\vec u|_U)^{-1} = \left( D \vec u \circ (\vec u|_U)^{-1} \right)^{-1} \text{ a.e.}
\]
\end{proposition}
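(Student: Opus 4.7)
The strategy is to reduce the statement to the Sobolev-regularity-of-the-inverse result of \cite[Th.\ 3.3]{HeMo15}, by verifying that its proof, originally formulated in the $W^{1,p}$ setting with $p>n-1$, depends only on ingredients that we have already secured in the Orlicz-Sobolev class $\mc{A}$. Concretely, that proof rests on four pillars: (i) M\"uller--Spector's condition INV; (ii) the a.e.\ differentiability of a good representative together with Lusin's (N) property; (iii) Federer's area formula; and (iv) the fact that $D\vec u$ is invertible a.e.\ with $\det D\vec u>0$ a.e. Items (i) and (ii) have been established in Proposition~\ref{pr:fine} together with \eqref{eq:imTUimGU}, while (iii) and (iv) are built into Definition~\ref{df:classA}, Proposition~\ref{pr:areaformula}, and Proposition~\ref{th:char1}.

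First I would set $\vec v:=(\vec u|_U)^{-1}$ as in Definition~\ref{de:inverse} and record that $\vec v$ is $\mc{L}^n$-a.e.\ defined on $\imT(\vec u,U)$, since $\imT(\vec u,U)=\imG(\vec u,U)$ a.e.\ by \eqref{eq:imTUimGU} and $\mc{L}^n(\imG(\vec u,U))\leq\int_U \det D\vec u\,\dd\vec x<\infty$ by the area formula. As $\vec v$ takes values in the bounded set $U$, it lies in $L^\infty\subset L^1$ on $\imT(\vec u,U)$. The candidate derivative is
\[
\vec G(\vec y):=\left(D\vec u(\vec v(\vec y))\right)^{-1}=\frac{1}{\det D\vec u(\vec v(\vec y))}\,\adj D\vec u(\vec v(\vec y)),\qquad \vec y\in \imT(\vec u,U),
\]
which is a.e.\ defined on $\imT(\vec u,U)$. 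To check $\vec G\in L^1$ I would apply Proposition~\ref{pr:areaformula} with $\varphi=|\vec G|$ and use that $|\vec G|\det D\vec u=|\adj D\vec u|$ coincides up to sign with $|\cof D\vec u|$, which lies in $L^1_{\loc}$ because \eqref{eq:L2logL} forces $D\vec u\in L^{n-1}_{\loc}$.

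To identify $\vec G$ with $D\vec v$ in the distributional sense, I would proceed as in \cite[Th.\ 3.3]{HeMo15}: fix a test function $\vecg\psi\in C_c^\infty(\imT(\vec u,U),\Rn)$, rewrite $\int_{\imT(\vec u,U)} \vec v(\vec y)\cdot D\vecg\psi(\vec y)\,\dd\vec y$ as an integral over $U$ via the area formula, and integrate by parts exploiting that $\mathcal{E}(\vec u)=0$, so that the distributional and pointwise Jacobian-type quantities coincide without creating surface defects. The INV condition, encoded via \eqref{eq:imTUimGU} and Definition~\ref{de:inverse}, is what guarantees that the pull-back is bijective a.e., thereby preventing distinct sheets of $\vec u$ from overlapping in $\imT(\vec u,U)$. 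Combined with the pointwise identity $D\vec u(\vec v(\vec y))\,\vec G(\vec y)=\vec I$, this yields the weak-derivative formula stated in the proposition.

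The main obstacle will be the integration-by-parts step in the reference configuration: in \cite{HeMo15} it relies on an approximation argument that uses $W^{1,p}$ Sobolev embeddings on $\partial U$ and cofactor integrability. In our setting this is precisely where we must replace the $W^{1,p}$ tools by their Orlicz-Sobolev counterparts, using Proposition~\ref{pr:embedding}, Convention~\ref{co:convention}, and Lemma~\ref{le:224}; the growth condition \eqref{eq:L2logL} is exactly the input needed so that $\cof D\vec u \in L^1_{\loc}$ and so that boundary values of $\vec u$ on the slices $\partial U_t$ behave continuously. Once this technical adaptation is carried out, the remainder of the argument is a verbatim translation of the proof in \cite{HeMo15}.
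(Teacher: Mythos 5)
Your proposal is correct and takes essentially the same route as the paper: the paper's own justification is a one-paragraph remark stating that a careful inspection shows \cite[Th.\ 3.3]{HeMo15} carries over to $\mathcal A$, with INV supplied by \eqref{eq:imTUimGU} and the integrability just above $W^{1,n-1}$ ensured by \eqref{eq:L2logL}. Your argument simply makes that inspection explicit (candidate derivative, area-formula bound on $\vec G$ via $|\cof D\vec u|\in L^1_{\loc}$, integration by parts using $\mathcal E(\vec u)=0$, and the Orlicz replacements for the boundary-trace tools), which is exactly what the paper is asserting implicitly.
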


\begin{proposition}\label{th:ujINV}
For each $j \in \N$, let $\vec u_j, \vec u \in \mc{A}$ satisfy $\vec u_j \weakc \vec u$ in $W^{1,A} (\O, \Rn)$ as $j \to \infty$.
The following assertions hold:
\begin{enumerate}[a)]
\item\label{item:thINV2} For any $U \in \mc{U}^N_{\vec u}$ and any 
compact set $K \subset \imT (\vec u, U)$ there exists a subsequence for which 
	${K} \subset \imT (\vec u_j, \O)$ for all $j \in \N$.
\item\label{item:thINV5} For a subsequence, there exists a disjoint family
\[
 \{ B_k \}_{k \in \N} \subset \mc{U}^{N,\inv}_{\vec u} \cap \bigcap_{j \in \N} \mc{U}^{N,\inv}_{\vec u_j}
\]
such that $\O = \bigcup_{k \in \N} B_k$ a.e.\ and, for each $k \in \N$,
\begin{equation}\label{eq:Bkinv}
 \vec u_j \to \vec u\quad  \text{uniformly on } \p B_k, \text{ as } j \to \infty .
\end{equation}


\item\label{item:thINV3} Let $B \in \mc{U}^{\inv}_{\vec u} \cap \bigcap_{j \in \N} \mc{U}^{\inv}_{\vec u_j}$ and take an open set $V \ssubset \imT (\vec u, B)$ such that $V \subset \imT (\vec u_j, B)$ for all $j \in \N$.
Then
\begin{enumerate}[1)]
\item\label{item:ujINVi} $(\vec u_j|_B)^{-1} \weakcs (\vec u|_B)^{-1}$ in $BV (V, \Rn)$ as $j \to \infty$;

\item\label{item:ujINViii} for any minor $M$, we have $M (D (\vec u_j|_B)^{-1})$, $M (D (\vec u|_B)^{-1})\in L^1 (V)$ for all $j \in \N$ and 
\[
 M\left( D (\vec u_j|_B)^{-1} \right) \weakcs M \left( D (\vec u|_B)^{-1} \right) \quad \text{in } \mc{M} (V) 
\text{ as } j \to \infty .
\]
\end{enumerate}
If, in addition, the sequence $\{ \det D (\vec u_j|_B)^{-1} \}_{j \in \N}$ is equiintegrable in $V$, then the convergence in \ref{item:ujINVi}) holds in the weak topology of $W^{1,1} (V, \Rn)$, and the convergence in \ref{item:ujINViii}) holds in the weak topology of $L^1 (V)$.

\item\label{item:thINV4} For a subsequence we have that $\chi_{\imT (\vec u_j, \O)} \to \chi_{\imT (\vec u, \O)}$ a.e.\ and in $L^1 (\Rn)$ as $j \to \infty$.

\end{enumerate}
\end{proposition}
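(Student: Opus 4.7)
The plan is to adapt the proof of \cite[Thms.~6.3--6.5]{BHM17} to the Orlicz-Sobolev setting, leaning on the replacement tools we have developed: Lemma~\ref{le:224} for uniform convergence on slicing spheres, Proposition~\ref{prop:degdiv} for the distributional degree (which works since $L^A(\p U)\subset L^{n-1}(\p U)$), Proposition~\ref{pr:215} for the Lusin property on hypersurfaces, and the fine properties from Proposition~\ref{pr:fine}. The backbone throughout is that the degree depends only on boundary data, so that uniform convergence on $\p U$ transfers the identity $\deg(\vec u_j,U,\cdot)=\deg(\vec u,U,\cdot)$ to any compact set lying strictly inside $\imT(\vec u,U)$.

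For part \emph{\ref{item:thINV2})}, fix $U\in\mathcal U_{\vec u}^N$ and a compact $K\subset\imT(\vec u,U)$. By Proposition~\ref{pr:good_Ut}\emph{\ref{it:KUt})} we may shrink $U$ slightly to some $U_t\in\mathcal U_{\vec u}^N$ so that $K\subset\imT(\vec u,U_t)$ and in addition, using Lemma~\ref{le:224}, $U_t\in\mathcal U_{\vec u_j}$ for all $j$ along a subsequence, with $\vec u_j\to\vec u$ uniformly on $\p U_t$. Then $\dist(K,\vec u(\p U_t))>0$ and $\vec u_j(\p U_t)$ eventually misses a neighbourhood of $K$, so the degree's homotopy invariance yields $\deg(\vec u_j,U_t,\vec y)=\deg(\vec u,U_t,\vec y)\neq 0$ for every $\vec y\in K$; hence $K\subset\imT(\vec u_j,U_t)\subset\imT(\vec u_j,\O)$. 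For part \emph{\ref{item:thINV5})}, start from the full-measure set $\O_{\inv}$, cover $\O$ up to a null set by a countable family of invertibility balls $B\in\mathcal U_{\vec u}^{N,\inv}$ (disjointness obtained by a Vitali-type selection and the fact that $\mathcal H^1(NC)=0$, which lets us avoid $NC$ when choosing radii), and apply Lemma~\ref{le:224} to each $B_k$ in a diagonal fashion to extract a subsequence along which $\vec u_j\to\vec u$ uniformly on every $\p B_k$ and $B_k\in\mathcal U_{\vec u_j}^{N,\inv}$ for every $j$; the invertibility of each $\vec u_j$ on $B_k$ is preserved because uniform boundary convergence plus the identity $\deg(\vec u_j,B_k,\cdot)=\mathcal N_{B_k}$ from \eqref{eq:degUNU} forces one-to-oneness a.e.\ on the entire sequence.

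For part \emph{\ref{item:thINV3})}, once $\vec u_j\to\vec u$ uniformly on $\p B$, the inverses $(\vec u_j|_B)^{-1}$ are uniformly bounded in $L^\infty$ by (a neighbourhood of) $\imT(\vec u,B)$; their distributional derivatives are represented by integration against $\cof D\vec u_j$ via the change-of-variables formula (Proposition~\ref{pr:areaformula}) combined with Proposition~\ref{prop:locINV}, giving a uniform $BV$ bound on $V$ and hence weak-$\ast$ compactness. The identification of the limit with $(\vec u|_B)^{-1}$ is made by testing with continuous functions and invoking the area formula together with the uniform boundary convergence, as in \cite[Thm.~6.3]{BHM17}. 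The corresponding statement for minors comes from the classical identity $M(D(\vec u|_B)^{-1})(\vec y)=\bigl(\text{minor of }(D\vec u)^{-1}\bigr)\circ(\vec u|_B)^{-1}(\vec y)$, which, via the area formula, is the pushforward of $M$ against $D\vec u_j$; weak convergence of the latter in $L^A$ then transfers to weak-$\ast$ convergence of the image measures, upgrading to weak $L^1$ convergence under equiintegrability of the inverse Jacobians. Part \emph{\ref{item:thINV4})} follows from \emph{\ref{item:thINV2})} for points inside $\imT(\vec u,\O)$, from a symmetric argument (a point $\vec y\notin\overline{\imT(\vec u,\O)}$ has zero degree on every sufficiently large $U_t\in\mathcal U_{\vec u}^N$, hence also on $\vec u_j$ along the subsequence) for points outside, and from the fact that $\p\imT(\vec u,\O)\subset\bigcup_k \vec u(\p B_k)$ has zero Lebesgue measure thanks to Proposition~\ref{pr:215}.

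The main obstacle is part \emph{\ref{item:thINV5})}: one must simultaneously (i) choose countably many radii avoiding the $\mathcal H^1$-null set $NC$ for the limit \emph{and} for every $\vec u_j$, (ii) ensure the slicing conclusions of Lemma~\ref{le:224} hold at each chosen radius, (iii) preserve invertibility in the limit and along the sequence, and (iv) extract a single subsequence that works for all $B_k$ at once. This is achieved by a Besicovitch-type diagonal construction as in \cite[Prop.~6.4]{BHM17}, but one must verify that each ingredient (the slicing in $W^{1,A}$, the compact embedding of $W^{1,A}(\p U)$ into continuous functions via Proposition~\ref{pr:embedding}, and the degree stability) still holds under the Orlicz growth \eqref{eq:L2logL}; these replacements are precisely what Sections \ref{se:notation} and \ref{se:H1continuity} have prepared.
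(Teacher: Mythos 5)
The sketch follows the paper's own route — adapt \cite[Thms.~6.3--6.5]{BHM17} with Orlicz replacements (Lemma \ref{le:224}, Proposition \ref{pr:fine}, Proposition \ref{pr:good_Ut}, Proposition \ref{prop:locINV}, and \eqref{eq:imTUimGU}) — and parts \emph{\ref{item:thINV2})}, \emph{\ref{item:thINV3})} and \emph{\ref{item:thINV4})} are in line with it. (In \emph{\ref{item:thINV2})} you attribute $U_t\in\bigcap_j\mathcal U_{\vec u_j}$ to Lemma \ref{le:224} alone; the paper also invokes the weak $L^1(\p U_t)$ continuity of $(\cof D\vec u_j)\vecg\nu_t$ to verify the cofactor conditions of Definition \ref{df:good_open_sets} for each $\vec u_j$, but this is an attribution detail, not a conceptual difference.)

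The genuine gap is in part \emph{\ref{item:thINV5})}. You claim that, once $\vec u_j\to\vec u$ uniformly on $\p B_k$, the identity $\deg(\vec u_j,B_k,\cdot)=\mc N_{B_k}$ from \eqref{eq:degUNU} "forces one-to-oneness a.e.\ on the entire sequence." This only works for $j$ large: the degree identity $\deg(\vec u_j,B_k,\cdot)=\deg(\vec u,B_k,\cdot)$ needs $\vec u_j(\p B_k)$ to lie in a small neighbourhood of $\vec u(\p B_k)$, which uniform convergence secures only eventually, and in a diagonal extraction the threshold $J_k$ grows with $k$, so the initial terms of the subsequence are never controlled. Yet the statement requires $B_k\in\bigcap_{j\in\N}\mc U^{N,\inv}_{\vec u_j}$, i.e.\ invertibility of \emph{every} $\vec u_j$ on $B_k$. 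The missing ingredient — and the one the paper explicitly flags when it says that "if a map is differentiable at a given point then the condition of regular approximate differentiability, used in \cite{BHM17}, is automatically satisfied" — is the a.e.\ differentiability of each $\vec u_j$ furnished by Proposition \ref{pr:fine}. The balls must be centred at points where $\vec u$ \emph{and all the} $\vec u_j$ are simultaneously differentiable with positive Jacobian (a full-measure set, being a countable intersection of full-measure sets), and the radii chosen small enough that each map is one-to-one on $B_k$ directly from differentiability; the degree argument then only supplements this for the passage $j\to\infty$. Without this, the construction does not deliver the claimed $\bigcap_{j\in\N}\mc U^{N,\inv}_{\vec u_j}$.
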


\begin{proof}
 \emph{Part \ref{item:thINV2}):} Let $U$ and $K$ be a set in $\mc{U}^N_{\vec u}$ and a compact subset of $\imT(\vec u, U)$. 
  By Proposition \ref{pr:good_Ut} there exists $\delta>0$ such that for a.e.\ $t\in (0,\delta)$
  $$U_t\in \bigcap_{j\in \N} \mathcal U_{\vec u_j}^N\quad \wedge\quad K\subset \imT(\vec u, U_t).$$
  By the embedding of Proposition \ref{pr:embedding}, 
  the weak continuity of minors of  \cite[Thm.\ 4.11]{BaCuOl81},
  and \cite[Lemma 8.2]{HeMo12}, for a.e.\ such $t$ there exists a subsequence for which
  $$ (\cof D\vec u_j)\vecg\nu_t \weakc (\cof D\vec u)\vecg\nu\quad \text{in }L^1(\partial U_t, \R^n),$$
  where $\vecg\nu_t$ is the unit exterior normal to $U_t$. 
  That $K\subset \imT(\vec u_j, U_t)\subset \imT(\vec u_j, \Omega)$ then follows by 
  Lemma \ref{le:224} and the homotopy-invariance of the degree (as in \cite[Lemma 3.6]{BHM17}).
  \medskip
  
  \emph{Part \ref{item:thINV5}):} The same proof of \cite[Thm.\ 6.3(b)]{BHM17} remains valid.
  It is necessary to take into account that if a map is differentiable at at given point then the condition of regular approximate differentiability,
  used in \cite{BHM17}, is automatically satisfied. Also, the proof uses \cite[Prop.\ 2.6 and Lemma 2.24]{BHM17},
  which have to be replaced by Proposition \ref{pr:fine} and Lemma \ref{le:224} (their Orlicz counterparts).
  \medskip
  
  \emph{Parts \ref{item:thINV3}) and \ref{item:thINV4}):} The proof of \cite[Thm.\ 6.3(c)]{BHM17} remains valid upon replacing
  Proposition 5.3, Equation (5.1), Lemma 2.24, and Lemma 5.18(a) in \cite{BHM17} with Proposition \ref{prop:locINV},
  Equation \eqref{eq:imTUimGU}, Lemma \ref{le:224}, and Proposition \ref{pr:good_Ut} of this paper. 
\end{proof}

\section{Functionals defined in the deformed configuration}\label{se:lower}

Let $W:\R^{n\times n}\to [0,\infty)$ be a polyconvex function. Assume that
\begin{align} \label{eq:coercivity1}
 W(\vec F)\geq cA(|\vec F|) + h(\det \vec F),\quad \vec F\in \R^{n\times n},
\end{align}
for a constant $c>0$ and a Borel function $h:(0,\infty)\to [0,\infty)$ such that
\begin{align} \label{eq:coercivity2}
  \lim_{t\searrow 0} h(t) = \lim_{t\to\infty} \frac{h(t)}{t} = \infty.
\end{align}

\begin{theorem} \label{th:existenceNematic}
 Let $\Omega$ be a Lipschitz domain of $\R^n$, $\Gamma$ an $(n-1)$-rectifiable subset of $\partial \Omega$
 with $\mathcal H^{n-1}(\Gamma)>0$, and $\vec u_0:\Gamma\to\R^n$. Define $\mathcal B$ as the set
 of $(\vec u, \vec n)$ where $\vec u \in \mathcal A$, $\vec u|_{\Gamma}=\vec u_0$
 and $\vec n \in W^{1,2}(\imT(\vec u, \Omega), \Sn)$. 
 Let $W:\R^{n\times n}_+\to [0,\infty)$ be a polyconvex function such that Eqs.\ \eqref{eq:coercivity1}
 and \eqref{eq:coercivity2} hold for a constant $c>0$ and a Borel function $h:(0,\infty)\to[0,\infty)$.
 Define $W_{\mec}$ as in \eqref{eq:model2}.
 If $\mathcal B\ne \vacio$ and 
 \begin{align}
  \label{eq:model3}
 I(\vec u, \vec n)= \int_\Omega W_{\mec}(D\vec u(\vec x), \vec n(\vec u(\vec x)))\dd\vec x 
  + \int_{\imT(\vec u,\Omega)} |D\vec n(\vec y)|^2 \dd\vec y
 \end{align}
 is not identically infinity in $\mathcal B$, 
 then $I$ attains its minimum in $\mathcal B$. 
\end{theorem}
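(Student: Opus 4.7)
\medskip

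\noindent\textbf{Proof plan.} The plan is to apply the direct method of the calculus of variations. Take a minimizing sequence $(\vec u_j,\vec n_j)_{j\in\N}\subset\mathcal B$. From the coercivity \eqref{eq:coercivity1} and the constraint $|\vec n_j|=1$, together with the boundary condition $\vec u_j|_\Gamma=\vec u_0$ and a Poincar\'e-type inequality in $W^{1,A}$, the sequence $\vec u_j$ is bounded in $W^{1,A}(\Omega,\R^n)$; moreover, \eqref{eq:coercivity2} and de la Vall\'ee Poussin's criterion yield equiintegrability of $\det D\vec u_j$. Passing to a subsequence, $\vec u_j\weakc\vec u$ in $W^{1,A}$ with $\vec u|_\Gamma=\vec u_0$ by trace continuity, and the weak continuity of minors (which remains available in this Orlicz setting thanks to the embedding $L^A\subset L^{n-1}$) gives $\cof D\vec u_j\weakc\cof D\vec u$ and, invoking the equiintegrability, $\det D\vec u_j\weakc\det D\vec u$ in $L^1$.

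Next I verify that the limit $\vec u$ lies in $\mathcal A$. The weak lower semicontinuity of $\mathcal E$ follows from its representation as a supremum of the functionals $\mathcal E(\cdot,\vec f)$ in \eqref{eq:Euf}, each of which is continuous under the established convergences (the composition $D\vec f(\vec x,\vec u_j(\vec x))\to D\vec f(\vec x,\vec u(\vec x))$ being handled by strong $L^{n-1}$ convergence via Rellich, combined with the weak convergence of the cofactor and determinant). Hence $\mathcal E(\vec u)\leq\liminf_j\mathcal E(\vec u_j)=0$, and Fatou's lemma applied to $h(\det D\vec u_j)$ together with $\lim_{t\searrow 0}h(t)=\infty$ forces $\det D\vec u>0$ a.e., so that $\vec u\in\mathcal A$.

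Now I extract a limit for the nematic field, which lives on the moving domain $\imT(\vec u_j,\Omega)$. By Proposition \ref{th:ujINV}\ref{item:thINV5}), after a further subsequence, there is a disjoint family $\{B_k\}_{k\in\N}\subset\mathcal U^{N,\inv}_{\vec u}\cap\bigcap_{j\in\N}\mathcal U^{N,\inv}_{\vec u_j}$ partitioning $\Omega$ a.e., with $\vec u_j\to\vec u$ uniformly on each $\partial B_k$. By Proposition \ref{th:ujINV}\ref{item:thINV2}), every compact $K\subset\imT(\vec u,B_k)$ eventually lies in $\imT(\vec u_j,B_k)$; a diagonal argument exhausting $\imT(\vec u,\Omega)$ by such compacts, together with the uniform $L^2$-bound on $D\vec n_j$ and the pointwise bound $|\vec n_j|=1$, extracts $\vec n\in W^{1,2}(\imT(\vec u,\Omega),\Sn)$ with $\vec n_j\weakc\vec n$ locally in $W^{1,2}$ and strongly in $L^2_{\loc}$ by Rellich, the unit-length constraint passing to the a.e.\ limit.

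Finally, I establish $I(\vec u,\vec n)\leq\liminf_j I(\vec u_j,\vec n_j)$. For the nematic term, Proposition \ref{th:ujINV}\ref{item:thINV4}) combined with weak-$L^2$ lower semicontinuity on compacts and Fatou's lemma gives $\int_{\imT(\vec u,\Omega)}|D\vec n|^2\,\dd\vec y\leq\liminf_j\int_{\imT(\vec u_j,\Omega)}|D\vec n_j|^2\,\dd\vec y$. For the elastic term, the polyconvexity of $W$, and hence of $W_{\mec}(\cdot,\vec n)$ for each fixed $\vec n$, together with the weak convergence of the minors of $D\vec u_j$, reduces the lower semicontinuity to the a.e.\ convergence of the composition $\vec n_j\circ\vec u_j\to\vec n\circ\vec u$ on each $B_k$. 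This composition is the main obstacle: I handle it by invoking Proposition \ref{th:ujINV}\ref{item:thINV3}), since the weak $BV$ convergence of the local inverses $(\vec u_j|_{B_k})^{-1}\to(\vec u|_{B_k})^{-1}$ on any open $V$ with $\bar V\subset\imT(\vec u,B_k)$, combined with the strong $L^2$ convergence $\vec n_j\to\vec n$ on $V$, transfers via the area formula (Proposition \ref{pr:areaformula}) to a.e.\ convergence of $\vec n_j(\vec u_j(\vec x))\to\vec n(\vec u(\vec x))$ on the preimage of $V$ in $B_k$. Summing over $k$ and applying a Ball-type polyconvex lower semicontinuity theorem \cite{Ball77} yields the claim, concluding the proof.
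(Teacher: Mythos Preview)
Your proposal is correct and follows essentially the same approach as the paper. The paper's own proof is a one-line reference to \cite[Thm.\ 8.2]{BHM17}, noting only that Proposition~\ref{th:ujINV} and equality~\eqref{eq:imTUimGU} must replace their $W^{1,p}$ counterparts; you have faithfully unpacked that reference into the full direct-method argument, correctly identifying the roles of parts \ref{item:thINV2})--\ref{item:thINV4}) of Proposition~\ref{th:ujINV} and invoking the area formula and local inverses exactly as in \cite[Props.\ 7.1 and 7.8]{BHM17} for the composition $\vec n_j\circ\vec u_j\to\vec n\circ\vec u$.
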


\begin{proof}
 The only substantial difference with the proof of \cite[Thm.\ 8.2]{BHM17}
 is the need of using Proposition \ref{th:ujINV} and equality \eqref{eq:imTUimGU}
 instead of \cite[Thm.\ 6.3 and equality (5.1)]{BHM17} in the proofs of
 \cite[Props.\ 7.1 and 7.8]{BHM17}.
\end{proof}

The other main conclusions in \cite{BHM17} are the lower semicontinuity 
for $\Div$-quasiconvex integrals (under the constraint of incompressibility) 
of Proposition 7.6; the lower semicontinuity for the model for plasticity of \cite{DaFo92,FoGa95paper};
the existence of minimizers in Theorem 8.6 for the Landau-de Gennes model for nematic elastomers of \cite{CaGaL14};
and Theorem 8.9 for the magnetostriction model of \cite{Kruzik15} where minimizers
$(\vec u, \vec m)$ are sought for
$$\int_\Omega W(D\vec u(\vec x), \vec m(\vec u(\vec x)))\dd\vec x 
+ \int_{\imT(\vec u, \Omega)} |D\vec m(\vec y)|^2 \dd\vec y  + \frac{1}{2}\int_{\R^n} |Du_{\vec m}(\vec y)|^2 \dd\vec y,$$
being $u_{\vec m}$ the unique weak solution to Maxwell's equation $$\div (-Du_{\vec m}
+ \chi_{\imT(\vec u, \Omega)}\vec m)=0\ \text{in}\ \R^n.$$
All of these results (not only the existence of minimizers for \eqref{eq:model1}, stated in Theorem \ref{th:existenceNematic})
can be proved under the milder coercivity condition \eqref{eq:L2logL} considered in this paper,
using the results of Sections \ref{se:H1continuity} and \ref{se:classA}.

{\small

  \section*{Acknowledgements}
  
  We are greateful to Carlos Mora-Corral for bringing to our attention the proof by Kauhanen, Koskela \& Mal\'y
  of the Lusin's property satisfied by Orlicz-Sobolev maps. We also thank Stanislav Hencl to whom B.S. has spoken during the conference \lq\lq Methods of Real Analysis and Theory of Elliptic Systems \rq\rq,  Rome.
  The research of D.H.\ and B.S.\ was supported, respectively,
  by the FONDECYT project 1150038 of the Chilean Ministry of Education
  and by University of Naples Project VAriational TECHniques in Advanced MATErials (VATEXMATE). The project has started during the visit of B.S. to Pontificia Universidad Cat\'olica de Chile in July 2018. She would like to thank for the friendly atmosphere during her visit.

  \bibliography{biblio} \bibliographystyle{siam}
}
\Addresses

\end{document}